\numberwithin{equation}{section}
\theoremstyle{plain}
\newtheorem{theorem}{Theorem}
\newtheorem{definition}[theorem]{Definition}
\newtheorem{lemma}[theorem]{Lemma}
\newtheorem{remark}[theorem]{Remark}
\begin{document}

\title[Limit Theorems for Empirical Density of Greatest Common Divisors]
{Limit Theorems for Empirical Density of Greatest Common Divisors}

\author{Behzad Mehrdad}

\author{Lingjiong Zhu}
\address
{Courant Institute of Mathematical Sciences\newline
\indent New York University\newline
\indent 251 Mercer Street\newline
\indent New York, NY-10012\newline
\indent United States of America}
\email{mehrdad@cims.nyu.edu
\\
ling@cims.nyu.edu}

\date{28 May 2015. \textit{Revised:} 18 April 2016}

\subjclass[2000]{60F10, 60F05, 11A05.}
\keywords{Greatest common divisors, coprime pairs, central limit theorems, large deviations, rare events.}

\begin{abstract}
The law of large numbers for the empirical density for the pairs of uniformly distributed integers
with a given greatest common divisor is a classic result in number theory. In this paper,
we study the large deviations of the empirical density. We will also obtain a rate of convergence
to the normal distribution for the central limit theorem.
Some generalizations are provided.
\end{abstract}

\maketitle

\section{Introduction}

Let $X_{1},\ldots,X_{n}$ be the random variables uniformly distributed on $\{1,2\ldots,n\}$.
It is well known that
\begin{equation}\label{LLN_1}
\frac{1}{n^{2}}\sum_{1\leq i,j\leq n}1_{\text{gcd}(X_{i},X_{j})=\ell}\rightarrow\frac{6}{\pi^{2}\ell^{2}},
\quad\ell\in\mathbb{N},
\end{equation}
in probability as $n\rightarrow\infty$.

The intuition is the following. If the law of large numbers holds, the limit is $\mathbb{P}(\text{gcd}(X_{1},X_{2})=\ell)$.  Let $X_{1},X_{2}\in C_{\ell}:=\{\ell n:n\in\mathbb{N}\}$ that happens with probability $\frac{1}{\ell^{2}}$ as $n\rightarrow\infty$. Observe that
\begin{equation}
\{\text{gcd}(X_{1},X_{2})=\ell\}=\{X_{1},X_{2}\in C_{\ell},\text{gcd}(X_{1}/\ell,X_{2}/\ell)=1\},
\end{equation}
where $\{X_{1},X_{2}\in C_{\ell}\}$ and $\{\text{gcd}(X_{1}/\ell,X_{2}/\ell)=1\}$
are asymptotically independent.  Therefore, we get \eqref{LLN_1} by noticing that $\sum_{\ell=1}^{\infty}\frac{1}{\ell^{2}}=\frac{\pi^{2}}{6}$.

On the other hand, two independent uniformly chosen integers are coprime if and only if they do not have a common prime factor.
For any prime number $p$, the probability that a uniformly random integer is divisible by $p$ is $\frac{1}{p}$ as $n$ goes to infinity.
Hence, we get an alternative formula,
\begin{equation}\label{LLN}
\frac{1}{n^{2}}\sum_{1\leq i,j\leq n}1_{\text{gcd}(X_{i},X_{j})=1}\rightarrow\prod_{p\in\mathcal{P}}\left(1-\frac{1}{p^{2}}\right)
=\frac{1}{\zeta(2)}=\frac{6}{\pi^{2}},
\end{equation}
in probability as $n\rightarrow\infty$,
where $\zeta(\cdot)$ is the Riemann zeta function and throughout this paper $\mathcal{P}$ denotes the set
of all the prime numbers in an increasing order.

The fact that $\mathbb{P}(\text{gcd}(X_{i},X_{j})=1)\rightarrow\frac{6}{\pi^{2}}$ was first proved by Ces\`{a}ro \cite{CesaroIII}.
The identity relating the product over primes to $\zeta(2)$ in \eqref{LLN}
is an example of an Euler product, and the evaluation of $\zeta(2)$ as 
$\pi^{2}/6$ is the Basel problem, solved by Leonhard Euler in 1735.
For a discussion on the probability of two numbers being coprime is $\frac{6}{\pi^{2}}$, we also refer to 
Theorem 332 in Hardy and Wright \cite{Hardy}. 
For further details and properties of the distributions, moments and asymptotic for the greatest common divisors,
we refer to Ces\`{a}ro \cite{CesaroI}, \cite{CesaroII}, Cohen \cite{Cohen}, Diaconis and Erd\H{o}s \cite{Diaconis}
and Fern\'{a}ndez and Fern\'{a}ndez \cite{FernandezI}, \cite{FernandezII}.

Since the law of large numbers result is well-known, it is natural to study the fluctuations, i.e. central limit theorem
and the probabilities of rare events, i.e. large deviations. The central limit theorem was recently
obtained in Fern\'{a}ndez and Fern\'{a}ndez \cite{FernandezI} and the possibility of the rate of convergence
to normality was also mentioned there. 
We will provide the rate of convergence to normal distribution. 
The main contribution of our paper is the large deviations result 
and the proofs are considerably more involved. 

For the readers who are interested in the probabilistic methods in number theory, we refer to the books
by Elliott \cite{Elliott} and Tenenbaum \cite{Tenenbaum}.

The paper is organized in the following way. In Section \ref{MainSection}, we state the main results, i.e.
the central limit theorem and the convergence rate to the Gaussian distribution and the large deviation principle
for the empirical density. 
The proofs for large deviation principle are given in Section \ref{LDPProofs},
and the proofs for the central limit theorem are given in Section \ref{CLTProofs}.

\section{Main Results}\label{MainSection}

\subsection{Central Limit Theorem}

In this section, we will show a central limit theorem and obtain the  rate of convergence
to the normal distribution. The method we will use is based on a Stein's method result 
for central limit theorems, that is, Theorem 3.6. in Ross \cite{Ross}. 
Before we proceed, for any two random variables $W,Z$, let us define the Wasserstein distance $d_{W}$ as
\begin{equation}
d_{\mathcal{H}}(W,Z)=\sup_{h\in\mathcal{H}}
\left|\int h(x)d\mu(x)-\int h(x)d\nu(x)\right|,
\end{equation}
where $\mu$ and $\nu$ are the probability laws of $W$ and $Z$, and
\begin{equation}
\mathcal{H}:=\left\{h:\mathbb{R}\rightarrow\mathbb{R}, |h(x)-h(y)|\leq|x-y|\right\}.
\end{equation}

\begin{theorem}\label{CLTThm}
Let $Z$ be a standard normal distribution with mean $0$ and variance $1$, and let $\ell\in\mathbb{N}$ .
\begin{equation}
d_{W}\left(\frac{\sum_{1\leq i,j\leq n}1_{\text{gcd}(X_{i},X_{j})=\ell}-n^{2}\frac{6}{\ell^{2}\pi^{2}}}
{2\sigma n^{3/2}},Z\right)\leq\frac{C_{\ell}}{n^{1/2}},
\end{equation}
where $C_{\ell}>0$ is a constant that depends only on $\ell$ such that $C_{\ell}=O(\ell^{5/2})$
as $\ell\rightarrow\infty$ and
\begin{equation}
\sigma^{2}:=\frac{1}{\ell^{3}}\prod_{p\in\mathcal{P}}
\left(1-\frac{2}{p^{2}}+\frac{1}{p^{3}}\right)-\frac{36}{\ell^{4}\pi^{4}}.
\end{equation}
\end{theorem}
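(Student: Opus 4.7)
The approach is to apply a Berry--Esseen-type bound for sums of locally dependent random variables via Stein's method, in the form of Baldi et al.~\cite{Baldi}. Set $W_{ij}:=\mathbf{1}_{\gcd(X_{i},X_{j})=\ell}$ and $S_{n}:=\sum_{1\le i,j\le n}W_{ij}$. The decisive structural fact is that $W_{ij}$ is independent of the collection $\{W_{kl}:\{k,l\}\cap\{i,j\}=\emptyset\}$. On the index set $V:=\{1,\ldots,n\}^{2}$ of cardinality $n^{2}$ the dependency neighborhoods therefore have size at most $D=4n-3$, placing us squarely in the regime covered by \cite{Baldi}.

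The technical core of the proof is the variance asymptotic $\mathrm{Var}(S_{n})\sim 4\sigma^{2}n^{3}$. Expanding $\mathrm{Var}(S_{n})=\sum_{(i,j),(k,l)}\mathrm{Cov}(W_{ij},W_{kl})$ and stratifying by the equality pattern among $\{i,j,k,l\}$, only the four configurations with $|\{i,j\}\cap\{k,l\}|=1$ and the remaining indices distinct produce an $n^{3}$-order contribution; all other patterns contribute $O(n^{2})$. For such a triple $(i,j,k)$ of distinct indices the limiting covariance is obtained by first conditioning on the divisibility event $\{\ell\mid X_{i},X_{j},X_{k}\}$ (asymptotic probability $\ell^{-3}$) and then factoring the residual coprimality event $\{\gcd(X_{i}/\ell,X_{j}/\ell)=\gcd(X_{i}/\ell,X_{k}/\ell)=1\}$ into independent prime-local events: for each prime $p$ the event fails with probability $p^{-1}(1-(1-p^{-1})^{2})=2p^{-2}-p^{-3}$, and multiplying the complementary probabilities over all primes yields exactly $\ell^{-3}\prod_{p}(1-2p^{-2}+p^{-3})-36/(\ell^{4}\pi^{4})=\sigma^{2}$. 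The main technical obstacle at this step is the rigorous interchange of the $n\to\infty$ limit with the infinite prime product, which I would handle by a sieve truncation argument together with the summability $\sum_{p}p^{-2}<\infty$. The classical coprime-pair estimate in $[N]^{2}$ also gives $\mathbb{E}[S_{n}]=n^{2}\cdot 6/(\ell^{2}\pi^{2})+O(n\log n)$, so recentering at the true mean introduces only a deterministic $O(\log n/n^{1/2})$ shift in the Kolmogorov distance via the elementary estimate $d_{KS}(X+c,Z)\le d_{KS}(X,Z)+\lVert\phi\rVert_{\infty}|c|$.

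With summand bound $B=O(1/\sigma_{n})=O(n^{-3/2})$, neighborhood size $D=O(n)$, and $|V|=n^{2}$, the bound of Baldi et al.\ applied to the normalized sum $(S_{n}-\mathbb{E}[S_{n}])/\sigma_{n}$ gives
\begin{equation*}
d_{KS}\!\left(\frac{S_{n}-\mathbb{E}[S_{n}]}{\sigma_{n}},Z\right)\le C\bigl(D^{3/2}B^{2}|V|^{1/2}+D^{2}B^{3}|V|\bigr)=O(n^{-1/2}),
\end{equation*}
since both terms on the right are of order $n^{-1/2}$. Replacing the true standard deviation $\sigma_{n}\sim 2\sigma n^{3/2}$ by the normalization appearing in the theorem is an asymptotically bounded scale change, absorbed into the universal constant $C$, and combining this with the mean-shift estimate from the previous paragraph yields the asserted bound.
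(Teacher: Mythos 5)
You take essentially the same route as the paper: both proofs invoke the Baldi--Rinott Stein-method bound for dependency graphs, with $\tilde a_{ij}$ (or $W_{ij}$) dependent exactly when the index pairs overlap, neighborhoods of size $O(n)$, and the same second-moment calculation showing that the $|\{i,j\}\cap\{k,\ell\}|=1$ triples dominate and give the variance constant $\beta-\alpha^2=\prod_p(1-2p^{-2}+p^{-3})-36/\pi^4$. The cosmetic differences (you work on the full ordered index set $V=\{1,\dots,n\}^2$ with $D=4n-3$; the paper reduces to $i<j$ with $D=2n-5$) do not affect anything.

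The one place where you go beyond the paper is that you explicitly account for the discrepancy between the finite-$n$ mean $\mathbb{E}[S_n]$ and the centering $n^2\cdot 6/(\ell^2\pi^2)$ that appears in the theorem. The paper's proof centers $W$ at the true mean $\alpha_n=\mathbb{P}(\gcd(X_1,X_2)=\ell)$ and never returns to reconcile this with the statement; this is in fact a missing step. However, your own bookkeeping runs into trouble here: by the classical estimate $\#\{(a,b)\in[n]^2:\gcd(a,b)=1\}=\tfrac{6}{\pi^2}n^2+O(n\log n)$ (tight up to refinements in the error term), the centering shift divided by $n^{3/2}$ is genuinely $O(\log n/n^{1/2})$, not $O(n^{-1/2})$. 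You correctly compute this as $O(\log n/n^{1/2})$, but then assert that combining it with the Baldi--Rinott estimate ``yields the asserted bound'' $C/n^{1/2}$ — that implication does not hold, since $\log n/n^{1/2}$ dominates $n^{-1/2}$. So either the theorem's bound should carry an extra $\log n$ factor, or one must center at $\mathbb{E}[S_n]$ rather than at $n^2\cdot 6/(\ell^2\pi^2)$; as written, neither your argument nor the paper's closes this gap. A separate minor point: you note that the true standard deviation is $\sim 2\sigma n^{3/2}$ yet accept the paper's normalization $2\sigma^2 n^{3/2}$ as an ``asymptotically bounded scale change absorbed into $C$.'' That is not right either — rescaling by a constant factor $\neq 1$ moves the limiting law away from the standard normal and leaves a nonvanishing Kolmogorov distance. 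This is almost certainly a typo in the theorem ($\sigma^2$ for $\sigma$), but it cannot be waved away into the constant.
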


\subsection{Large Deviation Principle}

In this section, we are interested to study the following probability,
\begin{equation}
\mathbb{P}\left(\frac{1}{n^{2}}\sum_{1\leq i,j\leq n}1_{\text{gcd}(X_{i},X_{j})=\ell}\in A\right)
,\qquad\text{as $n\rightarrow\infty$},
\end{equation}
Indeed, later we will see that,
\begin{equation}
\mathbb{P}\left(\frac{1}{n^{2}}\sum_{1\leq i,j\leq n}1_{\text{gcd}(X_{i},X_{j})=\ell}\in A\right)
=e^{-nH(A)+o(n)},
\quad\ell\in\mathbb{N},
\end{equation}
where $H(A)=0$ if $\frac{1}{\ell^{2}}\frac{6}{\pi^{2}}\in A$ 
and $H(A)>0$ if $\frac{1}{\ell^{2}}\frac{6}{\pi^{2}}\notin A$, i.e. this probability decays exponentially 
fast as $n\rightarrow\infty$ if the empirical mean deviates aways from the ergodic mean. 
This phenomenon is called large deviations in probability theory. 

Before we proceed, let us introduce the formal definition of large deviations.  
A sequence $(P_{n})_{n\in\mathbb{N}}$ of probability measures on a topological space $X$ 
satisfies the large deviation principle with rate function $I:X\rightarrow\mathbb{R}$ if $I$ is non-negative, 
lower semicontinuous and for any measurable set $A$, 
\begin{equation}
-\inf_{x\in A^{o}}I(x)\leq\liminf_{n\rightarrow\infty}\frac{1}{n}\log P_{n}(A)
\leq\limsup_{n\rightarrow\infty}\frac{1}{n}\log P_{n}(A)\leq-\inf_{x\in\overline{A}}I(x).
\end{equation}
Here, $A^{o}$ is the interior of $A$ and $\overline{A}$ is its closure. 
We refer to Dembo and Zeitouni \cite{Dembo} or Varadhan \cite{VaradhanII} for general background of large deviations and the applications.

For the moment, let us concentrate on the case in which we consider
the number of coprime pairs. Before we state our main result, let us introduce some notations and definitions first.
Let $S:=(s_{i})_{i\in\mathbb{N}}$ be a sequence of numbers on $[0,1]$. We define
the probability measure $\nu_{k}^{S}$ on $[0,1]$, for $k\in\mathbb{N}$, as follows.
\begin{equation}
\nu_{k}^{S}([0,b]):=\sum_{i=1}^{k}\prod_{j=1}^{i-1}(1-s_{j})(1-s_{i})^{b_{i}},
\end{equation}
where $b=0.b_{1}b_{2}\ldots$ is the binary expansion of $b$. As for the binary expansion of $b$, we always take the finite expansion, whenever there are more than one  representation. However, that does not have any effect on our problem, since the set of such numbers is countable and has measure zero under $\nu_{k}^{S}$, for any $k\in\mathbb{N}$.

Now, if we draw a random variable $U^{k}$
according to the measure $\nu_{k}^{S}$ and consider the first $k$ digits in the binary expansion of $U^{k}$, 
they are distributed as $k$ Bernoulli random variables with parameters $(s_{i})_{i=1}^{k}$. It is easy
to see that a measure $\nu^{S}$ exists as a weak limit of $\nu_{k}^{S}$ and let $\nu^{S}$ be its weak limit.
For example, if $s_{i}=\frac{1}{2}$, for $i\in\mathbb{N}$, then $\nu^{S}$ is simply the Lebesgue measure on $[0,1]$.
Let $(p_{i})_{i\in\mathbb{N}}$ be the members of $\mathcal {P}$ in the increasing order. From now on,
we work with $\nu_{k}$ and $\nu$, for which the $s_{i}$ is $\frac{1}{p_{i}}$, or 
\begin{equation}\label{my_nu}
\nu=\nu^{P}, \text{ where } P:=\left( \frac{1}{p_{i}}\right)_{i\in\mathbb{N}}.
\end{equation}

In addition, for $a\in[0,1]$ and $i\in\mathbb{N}$, we define
\begin{equation}\label{chi}
\chi_{i}(a)=\text{the $i$th digit in the binary expansion of $a$}.
\end{equation}
We also define $f:[0,1]^{2}\rightarrow\{0,1\}$, for $k\in\mathbb{N}$, as follows
\begin{equation}\label{my_rate}
f(x_{1},x_{2}):=1-\max_{i\in\mathbb{N}}\chi_{i}(x_{1})\chi_{i}(x_{2}).
\end{equation}
In other words, $f(x_{1},x_{2})$ is $1$ if $x_{1}$ and $x_{2}$ do not share a common $1$ at the same place in their binary expansions
and $f$ is $0$ otherwise. Now, we are ready to state our main result.

\begin{theorem}\label{LDPThm}
Recall that random variables $X_{i},\ldots,X_{n}$, are distributed uniformly on $\{1,2\ldots,n\}$. The probability measures
$\mathbb{P}\left(\frac{1}{n^{2}}\sum_{1\leq i,j\leq n}1_{\text{gcd}(X_{i},X_{j})=1}\in\cdot\right)$ satisfy
a large deviation principle with rate function
\begin{equation}\label{I_1}
I_{1}(x)=\inf_{\iint_{[0,1]^{2}}f(x_{1},x_{2})\mu(dx_{1})\mu(dx_{2})=x}\int_{[0,1]}\log\left(\frac{d\mu}{d\nu}\right)d\mu,
\end{equation}
where $\nu$ and $f$ are defined in \eqref{my_nu} and \eqref{my_rate}, respectively.
\end{theorem}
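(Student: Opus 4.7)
The plan is to prove the LDP via an encoding of integers by their prime-factor signatures, reduce to a Sanov-type LDP for empirical measures on a sequence space, and apply the contraction principle. Concretely, for each $X_i$, define $Y_i \in [0,1]$ via its binary expansion by setting the $k$-th bit equal to $1_{p_k \mid X_i}$. Then $1_{\gcd(X_i,X_j)=1} = f(Y_i,Y_j)$, and if $\mu_n := \frac{1}{n}\sum_{i=1}^n \delta_{Y_i}$ denotes the empirical measure on $[0,1]$, the empirical density of coprime pairs equals $\iint f(x,y)\,d\mu_n(x)\,d\mu_n(y)$. Thus the target LDP should follow from an LDP for $\mu_n$ (with reference measure $\nu$) combined with a contraction along this bilinear functional.

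The LDP for $\mu_n$ is handled by truncation. Although $X_i$ is iid uniform on $\{1,\ldots,n\}$, its prime-indicator coordinates are not exactly independent at finite $n$; however, for any fixed number $k$ of primes, a Chinese-remainder / inclusion--exclusion argument shows that the joint distribution of the $k$-bit truncations $Y_i^{(k)}$ converges to that of $n$ iid draws from $\nu_k$. On the finite space $\{0,1\}^k$, the classical Sanov theorem applies, giving an LDP for the truncated empirical measures $\mu_n^{(k)}$ with rate function $H(\mu \mid \nu_k) = \int \log(d\mu/d\nu_k)\,d\mu$.

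Since $f_k(x,y) := 1 - \max_{1\le i\le k} \chi_i(x)\chi_i(y)$ depends on only finitely many coordinates and is therefore continuous on $\{0,1\}^k \times \{0,1\}^k$, the functional $\mu \mapsto \iint f_k\,d\mu\,d\mu$ is continuous in the weak topology. The contraction principle then yields an LDP for the truncated empirical density with rate
\begin{equation*}
I_1^{(k)}(x) = \inf\left\{ H(\mu \mid \nu_k) : \iint f_k(x,y)\,d\mu(x)\,d\mu(y) = x \right\}.
\end{equation*}
Passing to $k \to \infty$ proceeds on two fronts. First, the error from replacing $f$ by $f_k$ is governed by the probability that two independent uniforms on $\{1,\ldots,n\}$ share a common prime factor larger than $p_k$, which is bounded by $\sum_{p > p_k} 1/p^2 \to 0$; an exponential-equivalence estimate shows this does not affect the large-deviation asymptotics. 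Second, by a Dawson--G\"artner projective-limit argument, together with the consistency of relative entropy under coordinate projections, the sequence $I_1^{(k)}$ increases to the claimed $I_1$.

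The principal obstacle is precisely this $k \to \infty$ passage: one must simultaneously control the exponentially-rare contributions from large primes (where the iid approximation begins to fail because primes near $n$ can divide at most one element of $\{1,\ldots,n\}$) and verify that the infimum defining $I_1$ is compatible with the truncated infima in the projective limit. A related subtlety is that $f$ itself is only lower semicontinuous in the natural product topology on $\{0,1\}^{\mathbb{N}}$, so the contraction principle must be applied at each finite level $k$ and then the limit taken, rather than directly on the infinite-dimensional space. Once these tail estimates and projective-limit identifications are in place, separate matching upper and lower bound arguments complete the LDP.
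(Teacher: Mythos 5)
Your encoding, the reduction to a Sanov-type LDP plus a contraction along the bilinear functional $\mu\mapsto\iint f\,d\mu\,d\mu$, and the strategy of truncating $f$ to $f_k$ and then passing to the limit are exactly the paper's approach, so the architecture is sound. However, you have left the core of the argument unproved and have in fact misstated where the difficulty lies. The claim that the error from replacing $f$ by $f_k$ is ``bounded by $\sum_{p>p_k}1/p^2\to 0$'' is a first-moment estimate: to prove the LDP you need a \emph{superexponential} estimate on $\frac{1}{n^2}\sum_{p>p_k}Y_p^2$ (with $Y_p$ the count of $X_i$ divisible by $p$), i.e.\ $\limsup_k\limsup_n \frac{1}{n}\log \mathbb{P}\bigl(\frac{1}{n^2}\sum_{p>p_k}Y_p^2>\epsilon\bigr)=-\infty$, which is a far stronger statement than smallness of the mean and does not follow from $\sum_{p>p_k}1/p^2\to 0$. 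This estimate (the paper's Theorem \ref{SuperII}) is the heart of the proof and requires three separate devices you have not supplied: a Chebyshev/moment-generating-function bound for binomial sums of $Y_p^2$ (Lemma \ref{MGF}, Theorem \ref{SuperI}) under the idealized independent measure $\tilde{\mathbb{P}}$; a change-of-measure bound from $\mathbb{P}$ to $\tilde{\mathbb{P}}$ for the intermediate range of primes, quantified via a Radon--Nikodym estimate and Mertens' theorem (Lemma \ref{K1K2}); and a Chinese-Remainder-Theorem coupling for the small-prime range, which is what rigorously converts ``the truncated bit vectors are approximately i.i.d.'' into an exponentially-good-approximation statement (Lemma \ref{CRTLemma}). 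You gesture at these as ``obstacles'' and ``exponential-equivalence estimates'' but do not address them, and they constitute essentially all the genuine work of the theorem.

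Two smaller remarks. First, the map $f$ is \emph{upper}, not lower, semicontinuous on $[0,1]^2$ with the natural (product-of-bit) topology: $\{f=1\}=\bigcap_i\{\chi_i(x)\chi_i(y)=0\}$ is a countable intersection of clopen sets, hence closed, and $f_k\downarrow f$. Second, the paper passes to the limit $k\to\infty$ via the exponentially-good-approximations machinery (Theorem 4.2.16 in Dembo--Zeitouni), which gives the rate $I_1(x)=\sup_{\delta>0}\liminf_k\inf_{|w-x|<\delta}I^{(k)}(w)$, rather than a Dawson--G\"artner projective-limit identification; either could in principle work, but whichever route you take still requires the superexponential control described above, so the choice does not spare you the missing estimates.
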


Let us get some intuition with \eqref{I_1}, before we see our next result. For $X\in\mathbb{N}$ and $p\in\mathcal{P}$,  the indicator $\textbf{1}_{p|X}$ is $1$ if $p$ divides $X$, and $0$ otherwise.  We let $a\in[0,1]$ be a number such that $\chi_{i}(a)=\textbf{1}_{p_{i}|X}$, where $p_{i}$  is the $i$th prime in $\mathcal{P}$ and $i\in\mathbb{N}$. In other words, the $i$th digit in the binary expansion of $a$ shows whether $X$  is divisible by $p_{i}$ or not.  We also define 
\begin{equation}
\psi : \mathbb{N}\to [0,1] \quad \text{as} \quad \psi(X):=a.
\end{equation}

Now, for integers $X,Y\in\mathbb{N}$, $gcd(X,Y)$ is $1$ if and only if, for every $p\in\mathcal{P}$, $p$ does not divide both $X$ and $Y$. So, comparing this with the definition \eqref{my_rate} of $f$, we get 
\begin{equation}
f(\psi (X),\psi(Y))=\textbf{1}_{gcd(X,Y)=1}.
\end{equation}

Therefore, our problem is to show large deviation principle for probability measures 
\begin{equation}
\mathbb{P}\left(\frac{1}{n^{2}}\sum_{1\leq i,j\leq n}{f(\psi(X_{i}), \psi(X_{j}))}\in\cdot\right),
\end{equation}
where $X_{i}$, for $1\leq i\leq n$, are distributed uniformly on $\{1,2\ldots,n\}$. We note that, for $p,q\in\mathcal{P}$ and as $n$ goes to infinity, the probabilities for the events $\{p|X_{1}\}$, $\{q|X_{1}\}$ and $\{pq|X_{1}\}$ approach to $\frac{1}{p}$, $\frac{1}{q}$ and $\frac{1}{pq}$, respectively. Hence, as $n$ goes to infinity, the underlying measure of $\psi(X_{1})$ looks more like $\nu$. Although this is not precise, for large $n$, $\psi(X_{1})\cdots \psi(X_{n})$ are $n$ i.i.d. random variables with measure $\nu$. Thus, our hope is to use Sanov's theorem to obtain large deviation principle for random variables $\psi(X_{i})$, and then, we use the 
contraction principle with the map $f$ to get the rate function \eqref{I_1}. 
 
There are a few issues on our way that need to be addressed, e.g. $\psi(X_{i})$ , for $1\leq i\leq n$, are not distributed as $\nu$
and the mapping $f$ is not continuous at any point (to apply the contraction principle, the mapping is usually assumed to be
continuous). We will come back to these obstacles in the proof section along with the statement of Sanov's theorem and the contraction principle.

We can also consider the following large deviation problem,
\begin{equation}
\mathbb{P}\left(\frac{1}{n^{2}}\sum_{1\leq i,j\leq n}1_{\text{gcd}(X_{i},X_{j})=\ell}\in\cdot\right).
\end{equation}
Write 
\begin{equation}
\ell=q_{1}^{\beta_{1}}q_{2}^{\beta_{2}}\cdots q_{m}^{\beta_{m}},
\end{equation}
where $q_{i}$ are distinct primes and $\beta_{i}$ are positive integers for $1\leq i\leq m$. 

For a fixed $\ell$, let $p_{1},\ldots,p_{k}$ be the smallest
$k$ primes distinct from $q_{1},\ldots,q_{m}$. Any positive integer can be written as
\begin{equation}
q_{1}^{\gamma_{1}}\cdots q_{m}^{\gamma_{m}}p_{1}^{\alpha_{1}}\cdots p_{k}^{\alpha_{k}},
\end{equation}
where $\gamma_{i}$ and $\alpha_{j}$ are non-negative integers.
Any number on $[0,1]$ can be written as
\begin{equation}
0.\gamma_{1}\gamma_{2}\cdots\gamma_{m}\alpha_{1}\alpha_{2}\cdots\alpha_{k}\cdots,
\end{equation}
where $\gamma_{1},\ldots,\gamma_{m}$ are obtained from ternary expansion and $\alpha_{1},\alpha_{2},\ldots$
are obtained from binary expansion. 

The interpretation is that if an integer is not divisible by $q_{i}^{\beta_{i}}$, then $\gamma_{i}=0$.
If it is divisible by $q_{i}^{\beta_{i}}$ but not by $q_{i}^{\beta_{i}+1}$, then $\gamma_{i}=1$.
Finally, if it is divisible by $q_{i}^{\beta_{i}+1}$, then $\gamma_{i}=2$.
We also have $\alpha_{j}=0$ if an integer is not divisible by $p_{j}$ and $1$ otherwise.

Restrict to the first $m+k$ digits and define a probability measure $\nu_{k}$ that takes values
\begin{equation}
g(q_{1})\cdots g(q_{m})\left(\frac{1}{p_{1}}\right)^{\alpha_{1}}\left(1-\frac{1}{p_{i}}\right)^{1-\alpha_{1}}
\cdots\left(\frac{1}{p_{k}}\right)^{\alpha_{k}}\left(1-\frac{1}{p_{k}}\right)^{1-\alpha_{k}},
\end{equation}
where
\begin{equation}
g(q_{i})
=
\begin{cases}
1-\frac{1}{q_{i}^{\beta_{i}}} &\text{if $\gamma_{i}=0$}
\\
\frac{1}{q_{i}^{\beta_{i}}}-\frac{1}{q_{i}^{\beta_{i}+1}} &\text{if $\gamma_{i}=1$}
\\
\frac{1}{q_{i}^{\beta_{i}+1}} &\text{if $\gamma_{i}=2$}
\end{cases},
\quad 1\leq i\leq m.
\end{equation}

Let $\nu$ be the weak limit of $\nu_{k}$. We get the following result. The proofs are similar
to that of Theorem \ref{LDPThm} and are omitted here. 


\begin{theorem} \label{LDPThm2}
For $\ell>1$, the probability measures
$\mathbb{P}\left(\frac{1}{n^{2}}\sum_{1\leq i,j\leq n}1_{\text{gcd}(X_{i},X_{j})=\ell}\in\cdot\right)$ satisfy
a large deviation principle with rate function
\begin{equation}
I_{\ell}(x)=\inf_{\iint_{[0,1]^{2}}f_{\ell}(x_{1},x_{2})\mu(dx_{1})\mu(dx_{2})=x}\int_{[0,1]}\log\left(\frac{d\mu}{d\nu}\right)d\mu,
\end{equation}
where $f_{\ell}(x_{1},x_{2})=1$ if $x_{1}$ and $x_{2}$ do not share a $2$ in their first $m$ digits or a common
$1$ in the rest of the expansion, and none of them have any $0$ in their first $m$ digits. Otherwise, $f_{\ell}(x_{1},x_{2})=0$.
\end{theorem}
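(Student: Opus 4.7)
The plan is to parallel the proof of Theorem \ref{LDPThm} step by step, with the purely binary coding replaced by the mixed ternary-binary coding prescribed above. First I would define $\psi_{\ell}:\mathbb{N}\to[0,1]$ that sends each positive integer, factored as $q_{1}^{\gamma_{1}^{\ast}}\cdots q_{m}^{\gamma_{m}^{\ast}}p_{1}^{\alpha_{1}}p_{2}^{\alpha_{2}}\cdots$, to the real number whose first $m$ digits are the ternary-valued $\gamma_{i}\in\{0,1,2\}$ determined by the rule in the theorem and whose subsequent binary digits are $\alpha_{j}=\mathbf{1}_{p_{j}|X}$. A direct check of prime valuations then yields $\mathbf{1}_{\gcd(X,Y)=\ell}=f_{\ell}(\psi_{\ell}(X),\psi_{\ell}(Y))$: the condition $\gcd(X,Y)=\ell$ forces, at each $q_{i}$, that both valuations are at least $\beta_{i}$ with at least one equal to $\beta_{i}$ (no $0$ among the $\gamma_{i}$'s and no common $2$), and, at each remaining prime $p_{j}$, that $p_{j}$ does not divide both $X$ and $Y$ (no common $1$).

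With this coding in place, the argument runs along the same lines as for $\ell=1$. For fixed truncation level $k$, the law of the first $m+k$ digits of $\psi_{\ell}(X_{1})$ under the uniform distribution on $\{1,\ldots,n\}$ converges as $n\to\infty$ to the measure $\nu_{k}$ defined before the theorem; this is immediate from the Chinese remainder theorem, which makes the marginals at distinct primes asymptotically independent with the correct weights $1-1/q_{i}^{\beta_{i}}$, $1/q_{i}^{\beta_{i}}-1/q_{i}^{\beta_{i}+1}$, $1/q_{i}^{\beta_{i}+1}$, and $1/p_{j}$. Applying Sanov's theorem in the adapted form used in the proof of Theorem \ref{LDPThm}, which accommodates the mild departure from the i.i.d.\ case, gives an LDP for the truncated empirical measure with rate function equal to the relative entropy with respect to $\nu_{k}$.

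Next, the truncated indicator $f_{\ell}^{(k)}$ obtained by reading only the first $m+k$ digits is continuous at every point whose expansion does not terminate, a set of full measure under every law in the effective domain of the rate function. The contraction principle applied to the map $\mu\mapsto\iint f_{\ell}^{(k)}(x,y)\,\mu(dx)\,\mu(dy)$ therefore yields a level-$k$ LDP for the truncated statistic. Finally, I would let $k\to\infty$ via the exponentially good approximations framework of Dembo and Zeitouni, the tail being controlled by the rapid decay of $\sum_{j>k}1/p_{j}^{2}$; lower semicontinuity of the relative entropy then identifies the limiting rate function as the claimed $I_{\ell}$.

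The main obstacle is the same as in the $\ell=1$ case: $f_{\ell}$ is nowhere continuous, so the contraction principle cannot be invoked directly and one must detour through finite-dimensional truncations together with an exponential approximation argument. Beyond this, the ternary positions introduce no essentially new analytic difficulty; they only reshape the marginal weights carried by $\nu_{k}$ at the prime divisors of $\ell$, and the combinatorics of the \emph{forbidden coincidences} encoded by $f_{\ell}$ is read off directly from the $\gcd=\ell$ condition on valuations.
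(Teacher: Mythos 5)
Your proposal is correct and follows essentially the same approach the paper intends: the paper explicitly omits the proof of Theorem~\ref{LDPThm2}, stating only that it is ``very close to that of Theorem~\ref{LDPThm}'' and skipping it. Your outline---replacing the binary coding by the mixed ternary/binary coding $\psi_{\ell}$, verifying $\mathbf{1}_{\gcd(X,Y)=\ell}=f_{\ell}(\psi_{\ell}(X),\psi_{\ell}(Y))$ via valuations at the $q_{i}$ and $p_{j}$, and then rerunning Sanov's theorem, the contraction principle on finite truncations, and the exponentially good approximation step with the tail controlled by $\sum_{j>k}1/p_{j}^{2}$---is precisely the adaptation the authors have in mind.
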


\begin{remark}
It is interesting to observe that $\frac{6}{\pi^{2}}$ is also the density of square-free integers. That is because an
integer is square-free if and only if it is not divisible by $p^{2}$ for any prime number $p$.
Therefore, we have the law of large numbers, i.e.
\begin{equation}
\frac{1}{n}\sum_{i=1}^{n}1_{X_{i}\text{ is square-free}}\rightarrow
\prod_{p\in\mathcal{P}}\left(1-\frac{1}{p^{2}}\right)=\frac{6}{\pi^{2}},.
\end{equation}
in probability as $n\rightarrow\infty$.
The central limit theorem is standard,
\begin{equation}
\frac{\sum_{i=1}^{n}1_{X_{i}\text{ is square-free}}-\frac{6n}{\pi^{2}}}{\sqrt{n}}\rightarrow
N\left(0,\frac{6}{\pi^{2}}-\frac{36}{\pi^{4}}\right),
\end{equation}
in distribution as $n\rightarrow\infty$.
The large deviation principle also holds with rate function
\begin{equation}
I(x):=x\log\left(\frac{x}{6/\pi^{2}}\right)+(1-x)\log\left(\frac{1-x}{1-6/\pi^{2}}\right).
\end{equation}
\end{remark}

\begin{remark}
One can also generalize the result to ask what it is the probability that if we uniformly randomly choose $d$ numbers
from $\{1,2,\ldots,n\}$ their greatest common divisor is $1$.
It is not hard to see that
\begin{equation}
\frac{1}{n^{d}}\sum_{1\leq i_{1},\ldots,i_{d}\leq n}1_{\text{gcd}(X_{i_{1}},\ldots,X_{i_{d}})=1}
\rightarrow\prod_{p\in\mathcal{P}}\left(1-\frac{1}{p^{d}}\right)=\frac{1}{\zeta(d)},
\end{equation}
in probability as $n\rightarrow\infty$.
There are $d^{2}n(n-1)\cdots(n-(2d-2))$ pairs $(i_{1},\ldots,i_{d})$ and $(j_{1},\ldots,j_{d})$ so that 
$|\{i_{1},\ldots,i_{d}\}\cap\{j_{1},\ldots,j_{d}\}|=1$. It is also easy to see that
\begin{equation}
\mathbb{P}\left(\text{gcd}(X_{1},\ldots,X_{d})=\text{gcd}(X_{d},\ldots,X_{2d-1})=1\right)
=\prod_{p\in\mathcal{P}}\left(1-\frac{2}{p^{d}}+\frac{1}{p^{2d-1}}\right).
\end{equation}
Therefore, we have the central limit theorem.
\begin{align}
&\frac{1}{d\cdot n^{\frac{2d-1}{2}}}
\left\{\sum_{1\leq i_{1},\ldots,i_{d}\leq n}1_{\text{gcd}(X_{i_{1}},\ldots,X_{i_{d}})=1}
-n^{d}\prod_{p\in\mathcal{P}}\left(1-\frac{1}{p^{d}}\right)\right\}
\\
&\rightarrow
N\left(0,\prod_{p\in\mathcal{P}}\left(1-\frac{2}{p^{d}}+\frac{1}{p^{2d-1}}\right)
-\prod_{p\in\mathcal{P}}\left(1-\frac{1}{p^{d}}\right)^{2}\right),\nonumber
\end{align}
in distribution as $n\rightarrow\infty$.
We also have that $\mathbb{P}(\frac{1}{n^{d}}\sum_{1\leq i_{1},\ldots,i_{d}\leq n}1_{\text{gcd}(X_{i_{1}},\ldots,X_{i_{d}})=1}\in\cdot)$
satisfies a large deviation principle with the rate function
\begin{equation}
I(x)=\inf_{\idotsint_{[0,1]^{d}}f(x_{1},x_{2},\ldots,x_{d})\mu(dx_{1})\cdots\mu(dx_{d})=x}\int_{[0,1]}\log\left(\frac{d\mu}{d\nu}\right)d\mu,
\end{equation}
where $\nu$ is the same as in Theorem \ref{LDPThm} and
\begin{equation}
f(x_{1},\ldots,x_{d})=
\begin{cases}
1 &\text{if $x_{1},\ldots,x_{d}$ do not share a common $1$ in their binary expansions}
\\
0 &\text{otherwise}
\end{cases}.
\end{equation}
\end{remark}


\section{Proofs of Large Deviation Principle}\label{LDPProofs}

The proof is the discussion that follows Theorem \ref{LDPThm}. In order to make that precise, we need
to prove a series of lemmas and theorems of superexponential estimates. It is also worth mentioning that the proof of Theorem \ref{LDPThm2} is very close to that of Theorem \ref{LDPThm} and we skip it.

Let us give the definitions of $Y_{p}$, $S(k_{1},k_{2})$, $\tilde{X}_{i}$, and $\tilde{Y}_{p}$ that will be used repeatedly throughout
this section.

\begin{definition}
For any prime number $p$, we define
\begin{equation}
Y_{p}:=\#\{1\leq i\leq n: X_{i}\text{ is divisible by $p$}\}.
\end{equation}
\end{definition}

\begin{definition}
For any $k_{1},k_{2}\in\mathbb{N}$, let us define
\begin{equation}
S(k_{1},k_{2}):=\{p\in\mathcal{P}:k_{1}<p\leq k_{2}\}.
\end{equation}
\end{definition}

\begin{definition}
We define i.i.d. $\mathbb{N}$ valued random variables $\tilde{X}_{i}$ such that $\mathbb{P}(\text{$\tilde{X}_{i}$ is divisible
by $p$})=\frac{1}{p}$ for any $p\in\mathcal{P}$, $p\leq n$, 
and $\mathbb{P}(\text{$\tilde{X}_{i}$ is divisible
by $p$})=0$ for any $p\in\mathcal{P}$, $p>n$, and the events $\{\text{$\tilde{X}_{i}$ divisible by $p$}\}$ and 
$\{\text{$\tilde{X}_{i}$ divisible by $q$}\}$ 
are independent for distinct $p,q\in\mathcal{P}$, $p,q\leq n$. 
We define $\tilde{Y}_{p}$ as
$\tilde{Y}_{p}:=\#\{1\leq i\leq n: \tilde{X}_{i}\text{ is divisible by $p$}\}$.
\end{definition}

\begin{lemma}\label{MGF}
Let $\tilde{Y}$ be a Binomial random variable distributed as $B(\alpha,n)$. For any $\lambda>0$, let $\lambda_{1}:=e^{\lambda}$.
If $2\alpha\lambda_{1}^{2}<1$ and $\alpha<\frac{1}{2}$, then, for sufficiently large $n$,
\begin{equation}
\frac{1}{n}\log\mathbb{E}\left[e^{\frac{\lambda}{n}\tilde{Y}^{2}}\right]
\leq 4\lambda\alpha^{2}\lambda_{1}^{4}+\frac{\log 4(n+1)}{n}.
\end{equation} 
\end{lemma}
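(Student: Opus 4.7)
The plan is to bound $\tilde{\mathbb{E}}[e^{\lambda Y^2/n}]$ by decomposing along the values of $Y$, applying a pointwise Chernoff estimate to the Binomial pmf, and then optimizing the resulting exponent. Concretely, I first write
\begin{equation*}
\tilde{\mathbb{E}}[e^{\lambda Y^2/n}] = \sum_{k=0}^n \tilde{\mathbb{P}}(Y=k)\, e^{\lambda k^2/n} \leq (n+1)\max_{0\leq k \leq n}\tilde{\mathbb{P}}(Y=k)\, e^{\lambda k^2/n},
\end{equation*}
and then apply $\tilde{\mathbb{P}}(Y=k) \leq e^{-n I_\alpha(k/n)}$, where $I_\alpha(x) := x\log(x/\alpha) + (1-x)\log((1-x)/(1-\alpha))$ is the Bernoulli relative entropy; this pointwise bound follows by taking the minimum of the standard upper- and lower-tail Chernoff inequalities for a $B(\alpha,n)$ random variable. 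Writing $x = k/n$, the task reduces to controlling $\sup_{x \in [0,1]} f(x)$ with $f(x) := \lambda x^2 - I_\alpha(x)$.

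The key step is to show $\sup f \leq 4\lambda\alpha^2 \lambda_1^4$ when $\lambda \geq 0$. Setting $f'(x^*) = 0$ produces the critical-point relation
\begin{equation*}
\frac{x^*}{1-x^*} = \frac{\alpha}{1-\alpha}\, e^{2\lambda x^*},
\end{equation*}
and using $(1-x^*) \leq 1$, $x^* \leq 1$, and $(1-\alpha)^{-1} \leq 2$ (from $\alpha < 1/2$), any critical point satisfies $x^* \leq 2\alpha e^{2\lambda} = 2\alpha\lambda_1^2$. Since $I_\alpha \geq 0$, this gives $f(x^*) \leq \lambda(x^*)^2 \leq 4\lambda\alpha^2\lambda_1^4$. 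For the endpoints, $f(0) = \log(1-\alpha) \leq 0$, and the hypothesis $2\alpha\lambda_1^2 < 1$ forces $\log\alpha < -2\lambda - \log 2$, so $f(1) = \lambda + \log\alpha < -\log 2 \leq 0$; both are dominated by $4\lambda\alpha^2 \lambda_1^4$.

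Combining the two ingredients yields $\tilde{\mathbb{E}}[e^{\lambda Y^2/n}] \leq (n+1)\, e^{4n\lambda\alpha^2\lambda_1^4}$, and taking $\frac{1}{n}\log$ gives the stated estimate up to the constant inside the logarithm; the extra factor of $4$ should be recoverable by invoking a slightly sharper pointwise Chernoff bound via Stirling. The main obstacle I expect is the supremum analysis: for large $\lambda$ the function $f$ need not be concave on $[0,1]$ (indeed $f'' = 2\lambda - 1/(x(1-x))$ may change sign), so a priori there could be multiple interior critical points. This is sidestepped by the observation that \emph{every} critical point satisfies the same implicit relation and is therefore automatically bounded by $2\alpha\lambda_1^2$, so both endpoint and critical values are controlled simultaneously and no uniqueness argument is required.
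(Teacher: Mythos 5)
Your proof is correct and follows essentially the same route as the paper: bound the expectation by $(n+1)$ times the largest term, use an entropy estimate on the Binomial pmf (the paper does this via Stirling, $\binom{n}{k}\leq 4e^{nH(k/n)}$, you via the pointwise Chernoff bound $\tilde{\mathbb{P}}(Y=k)\leq e^{-nI_\alpha(k/n)}$, which is the same thing modulo the factor $4$), then show the maximizer of $\lambda x^2 - I_\alpha(x)$ lies in $[0, 2\alpha\lambda_1^2]$ and use $I_\alpha\geq 0$. Your closing remark about the factor of $4$ is backwards — $\log(n+1)\leq\log 4(n+1)$, so your bound already implies the stated one and nothing needs recovering — and note that both your argument and the paper's implicitly require $\lambda\geq 0$ (in the step $e^{2\lambda x}\leq e^{2\lambda}$, resp. $2\lambda x\leq 2\lambda$), which is harmless since the lemma is only invoked with $\lambda>0$.
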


\begin{proof}
By the definition of Binomial distribution, 
\begin{align}
\mathbb{E}\left[e^{\frac{\lambda}{n}\tilde{Y}^{2}}\right]
&=\sum_{i=0}^{n}\binom{n}{i}\alpha^{i}(1-\alpha)^{n-i}e^{\frac{\lambda i^{2}}{n}}
\\
&\leq(n+1)\max_{0\leq i\leq n}\binom{n}{i}\alpha^{i}(1-\alpha)^{n-i}e^{\frac{\lambda i^{2}}{n}}.\nonumber
\end{align}
Using Stirling's formula, for any $n\in\mathbb{N}$,
\begin{equation}
1\leq\frac{n!}{\sqrt{2\pi n}(n/e)^{n}}\leq\frac{e}{\sqrt{2\pi}}.
\end{equation}
Therefore, we have $\binom{n}{i}\leq 4e^{nH(i/n)}$, where 
$H(x):=-x\log x-(1-x)\log(1-x)$, $0\leq x\leq 1$. Hence, 
\begin{align}
&\frac{1}{n}\log\mathbb{E}\left[e^{\frac{\lambda}{n}\tilde{Y}^{2}}\right]
\\
&\leq\frac{\log 4(n+1)}{n}
+\max_{0\leq i\leq n}\left\{H\left(\frac{i}{n}\right)+\frac{i}{n}\log(\alpha)+\left(1-\frac{i}{n}\right)\log(1-\alpha)
+\lambda\left(\frac{i}{n}\right)^{2}\right\}.\nonumber
\end{align}
To find the maximum of
\begin{equation}
f(x):=H(x)+x\log(\alpha)+(1-x)\log(1-\alpha)+\lambda x^{2},
\end{equation}
it is sufficient to look at
\begin{equation}\label{maxI}
f'(x)=\log\left(\frac{\alpha}{1-\alpha}\right)-\log\left(\frac{x}{1-x}\right)+2\lambda x.
\end{equation}
The assumptions $2\alpha\lambda_{1}^{2}<1$ and $\alpha<\frac{1}{2}$ implies that
\begin{equation}\label{maxII}
\frac{\alpha}{1-\alpha}\lambda_{1}^{2}
\leq 2\alpha\lambda_{1}^{2}\leq\frac{2\alpha\lambda_{1}^{2}}{1-2\alpha\lambda_{1}^{2}}.
\end{equation}
Since logarithm is an increasing function, \eqref{maxI} and \eqref{maxII} imply
that $f'(x)<0$ for any $x\geq 2\alpha\lambda_{1}^{2}$. Therefore, the maximum of $f$ is attained
at some $x\leq 2\alpha\lambda_{1}^{2}$.

In addition, since $\log(\frac{x}{1-x})$ is increasing in $x$, the maximum of
\begin{equation}
g(x):=H(x)+x\log(\alpha)+(1-x)\log(1-\alpha)
\end{equation}
is achieved at $x=\alpha$, which is $g(\alpha)=0$. Hence,
\begin{equation}
\max_{0\leq x\leq 1}f(x)=\max_{0\leq x\leq 2\alpha\lambda_{1}}f(x)\leq 0+\lambda x^{2}\leq\lambda(2\alpha\lambda_{1}^{2})^{2},
\end{equation}
which concludes the proof.
\end{proof}

\begin{theorem}\label{SuperI}
For any $k,n\in\mathbb{N}$ sufficiently large and $\epsilon>0$,
\begin{equation}
\frac{1}{n}\log\mathbb{P}\left(\sum_{p\in S(k,n)}\tilde{Y}_{p}^{2}>n^{2}\epsilon\right)
\leq-\frac{\epsilon}{8}\log(k)+4.
\end{equation}
Therefore, we have the following superexponential estimate,
\begin{equation}
\limsup_{k\rightarrow\infty}\limsup_{n\rightarrow\infty}
\frac{1}{n}\log\mathbb{P}\left(\sum_{p\in S(k,n)}\tilde{Y}_{p}^{2}>n^{2}\epsilon\right)
=-\infty.
\end{equation}
\end{theorem}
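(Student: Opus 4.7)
The plan is to combine an exponential Chebyshev (Chernoff) bound with Lemma \ref{MGF}, using the fact that under $\tilde{\mathbb{P}}$ the random variables $(Y_{p})_{p\in S(k,n)}$ are independent and each $Y_{p}\sim B(1/p,n)$. For any $\lambda>0$, Markov's inequality together with independence yields
\begin{equation*}
\tilde{\mathbb{P}}\left(\sum_{p\in S(k,n)}Y_{p}^{2}>n^{2}\epsilon\right)
\leq e^{-\lambda n\epsilon}\prod_{p\in S(k,n)}\tilde{\mathbb{E}}\left[e^{\frac{\lambda}{n}Y_{p}^{2}}\right].
\end{equation*}

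Next I would apply Lemma \ref{MGF} with $\alpha=1/p$ to each factor. The hypothesis $\alpha<1/2$ is automatic once $p>k\geq 3$, while $2\alpha\lambda_{1}^{2}<1$ must hold \emph{uniformly} over $p\in S(k,n)$; since every such prime exceeds $k$, it suffices that $\lambda_{1}^{2}<k/2$. This uniformity requirement forces $\lambda$ to grow with $k$, but only sublinearly in $\log k$. I would choose
\begin{equation*}
\lambda:=\tfrac{1}{8}\log k,\qquad\text{so that}\qquad \lambda_{1}^{2}=k^{1/4},\quad \lambda_{1}^{4}=k^{1/2},
\end{equation*}
making $2\alpha\lambda_{1}^{2}\leq 2k^{1/4}/k\to 0$ uniformly for $p>k$.

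Summing the bound of Lemma \ref{MGF} over $p\in S(k,n)$ and using the elementary tail estimate $\sum_{p>k}p^{-2}\leq\sum_{m>k}m^{-2}\leq 1/k$ gives
\begin{equation*}
\frac{1}{n}\log\prod_{p\in S(k,n)}\tilde{\mathbb{E}}\left[e^{\frac{\lambda}{n}Y_{p}^{2}}\right]
\leq 4\lambda\lambda_{1}^{4}\sum_{p\in S(k,n)}\frac{1}{p^{2}}+\frac{|S(k,n)|\log 4(n+1)}{n}
\leq\frac{\log k}{2\sqrt{k}}+\frac{\pi(n)\log 4(n+1)}{n}.
\end{equation*}
The first summand is uniformly bounded by $1$, and the second is bounded by an absolute constant (say $3$) for large $n$ using Chebyshev's crude bound $\pi(n)=O(n/\log n)$. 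Substituting back into the Chernoff inequality yields
\begin{equation*}
\frac{1}{n}\log\tilde{\mathbb{P}}\left(\sum_{p\in S(k,n)}Y_{p}^{2}>n^{2}\epsilon\right)\leq -\lambda\epsilon+4=-\frac{\epsilon\log k}{8}+4,
\end{equation*}
which is the claimed inequality; the superexponential statement then follows by sending $n\to\infty$ and afterwards $k\to\infty$.

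The principal delicate point is the simultaneous balancing of $\lambda$ over the entire prime range $S(k,n)$: $\lambda$ must be large enough that the Chernoff gain $-\lambda\epsilon$ scales like $-\epsilon\log k$ (so that decay is genuinely superexponential in $k$), yet small enough that $2\alpha\lambda_{1}^{2}<1$ holds for every prime $p>k$ and that $\lambda_{1}^{4}\sum_{p>k}p^{-2}$ stays small. The choice $\lambda=(\log k)/8$ is the sweet spot where these competing demands are compatible; any significantly larger growth rate would violate the hypothesis of Lemma \ref{MGF} for primes just above $k$, and any significantly smaller one would fail to produce the $\log k$ factor in the exponent.
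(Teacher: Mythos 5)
Your argument is correct and follows essentially the same route as the paper: exponential Chebyshev with independence of $(Y_p)$ under $\tilde{\mathbb{P}}$, then Lemma \ref{MGF} with $\alpha=1/p$, the tail bound $\sum_{p>k}p^{-2}\leq 1/k$, and the prime number theorem to control the $\frac{|S(k,n)|\log 4(n+1)}{n}$ term, with $\lambda$ chosen to grow like $\log k$. The only (inessential) divergence is your choice $\lambda=\tfrac{1}{8}\log k$ in place of the paper's $\lambda=\tfrac{1}{4}\log(\epsilon k)-3$; yours is arguably cleaner since it makes the uniform validity of Lemma \ref{MGF} over all $p>k$ and the bound $4\lambda\lambda_1^4/k=\frac{\log k}{2\sqrt{k}}\leq 1$ transparent without any case analysis in $\epsilon$.
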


\begin{proof}
Note that $\tilde{Y}_{p}=\#\{1\leq i\leq n:\tilde{X}_{i}\text{ is divisible by $p$}\}$. And
whether $\tilde{X}_{i}$ is divisible by $p$ is independent from $\tilde{X}_{i}$ being divisible by $q$
for distinct primes $p$ and $q$. In other words, $\tilde{Y}_{p}$ are independent for distinct primes $p\in\mathcal{P}$.
By Chebyshev's inequality, for any $\lambda>0$,
\begin{align}
\frac{1}{n}\log\mathbb{P}\left(\sum_{p\in S(k,n)}\tilde{Y}_{p}^{2}>n^{2}\epsilon\right)
&\leq-\lambda\epsilon+\frac{1}{n}\log\mathbb{E}\left[e^{\frac{\lambda}{n}\sum_{p\in S(k,n)}\tilde{Y}_{p}^{2}}\right]
\label{UpperZero}
\\
&=-\lambda\epsilon+\frac{1}{n}\sum_{p\in S(k,n)}\log\mathbb{E}\left[e^{\frac{\lambda}{n}\tilde{Y}_{p}^{2}}\right].
\nonumber
\end{align}
We choose $k\in\mathbb{N}$ large enough so that $\lambda_{1}=e^{\lambda}<\sqrt{2k}$. For $k<p\leq n$, we have
$\frac{2}{p}\lambda_{1}^{2}<\frac{2}{k}\lambda_{1}^{2}<1$.
By Lemma \ref{MGF}, we have
\begin{equation}\label{UpperI}
\frac{1}{n}\sum_{p\in S(k,n)}\log\mathbb{E}\left[e^{\frac{\lambda}{n}\tilde{Y}_{p}^{2}}\right]
\leq\sum_{p\in S(k,n)}\frac{\log(4(n+1))}{n}+4\lambda\left(\frac{1}{p}\right)^{2}\lambda_{1}^{4}.
\end{equation}
Prime number theorem states that
\begin{equation}
\lim_{x\rightarrow\infty}\frac{\pi(x)}{x/\log(x)}=1,
\end{equation}
where $\pi(x)$ denotes the number of primes less than $x$. Therefore, $|k<p\leq n,p\in\mathcal{P}|\leq\frac{2n}{\log n}$
for sufficiently large $n$. Together with \eqref{UpperI}, for sufficiently large $n$, we get
\begin{align}
\frac{1}{n}\log\mathbb{E}\left[e^{\frac{\lambda}{n}\sum_{k<p\leq n,p\in\mathcal{P}}\tilde{Y}_{p}^{2}}\right]
&\leq\frac{2n}{\log n}\frac{\log 4(n+1)}{n}+4\lambda\lambda_{1}^{4}\sum_{k<p\leq n,p\in\mathcal{P}}\frac{1}{p^{2}}\label{UpperII}
\\
&\leq 3+4\lambda\lambda_{1}^{4}\sum_{\ell>k}\frac{1}{\ell^{2}}\nonumber
\\
&\leq 3+4\lambda\lambda_{1}^{4}\frac{1}{k}.\nonumber
\end{align}
Plugging \eqref{UpperII} into \eqref{UpperZero}, we get
\begin{align}
\frac{1}{n}\log\mathbb{P}\left(\sum_{k<p\leq n,p\in\mathcal{P}}\tilde{Y}_{p}^{2}>n^{2}\epsilon\right)
&\leq-\lambda\epsilon+3+\frac{4\lambda\lambda_{1}^{4}}{k}
\\
&=3-\lambda\left(\epsilon-\frac{4\lambda_{1}^{4}}{k}\right).\nonumber
\end{align}
We can choose $\lambda=\frac{1}{4}\log(\epsilon k)-3$ so that $\frac{4\lambda_{1}^{4}}{k}<\frac{\epsilon}{2}$
and it does not violate with our earlier assumption that $\lambda_{1}<\sqrt{2k}$ for large $k$. Hence,
\begin{align}
\frac{1}{n}\log\mathbb{P}\left(\sum_{k<p\leq n,p\in\mathcal{P}}\tilde{Y}_{p}^{2}>\epsilon n^{2}\right)
&\leq 3-\frac{\log(k\epsilon)}{8}\epsilon+3\epsilon
\\
&\leq 4-\frac{\log(k)}{8}\epsilon,\nonumber
\end{align}
which yields the desired result.
\end{proof}

\begin{lemma}\label{K1K2}
Given sufficiently large $k_{1},k_{2}\in\mathbb{N}$, for any sufficiently large $n$, 
\begin{equation}\label{UpperIII}
\frac{1}{n}\log\mathbb{P}\left(\sum_{S(k_{1},k_{2})}Y_{p}^{2}>\epsilon n^{2}\right)
\leq 4\log\log k_{2}+4-\frac{\log(k_{1})}{8}\epsilon.
\end{equation}
\end{lemma}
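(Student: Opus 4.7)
The plan is to reduce to Theorem~\ref{SuperI} via a change of measure from $\mathbb{P}$ to $\tilde{\mathbb{P}}$ on the finite $\sigma$-algebra
$\mathcal{F}:=\sigma\{\mathbf{1}_{p\mid X_{i}}:p\in S(k_{1},k_{2}),\,1\le i\le n\}$
generated by the divisibility indicators. Since $Y_{p}=0$ whenever $p>n$, we may assume without loss of generality that $k_{2}\le n$, so that the event $A:=\{\sum_{p\in S(k_{1},k_{2})}Y_{p}^{2}>\epsilon n^{2}\}$ belongs to $\mathcal{F}$ and is contained in $\{\sum_{p\in S(k_{1},n)}Y_{p}^{2}>\epsilon n^{2}\}$, to which Theorem~\ref{SuperI} applies directly under $\tilde{\mathbb{P}}$.

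Because the $X_{i}$ are i.i.d.\ under both measures, the Radon--Nikodym derivative of $\mathbb{P}$ with respect to $\tilde{\mathbb{P}}$ restricted to $\mathcal{F}$ factorizes as $\prod_{i=1}^{n} p(\delta^{i})/\tilde{p}(\delta^{i})$, where $\delta^{i}:=(\mathbf{1}_{p\mid X_{i}})_{p\in S(k_{1},k_{2})}$ and $p(\delta),\tilde{p}(\delta)$ denote the probabilities of the pattern $\delta$ under $\mathbb{P}$ and $\tilde{\mathbb{P}}$ respectively. The heart of the argument is the uniform pointwise bound
\[
\sup_{\delta}\frac{p(\delta)}{\tilde{p}(\delta)}\le \frac{C\log k_{2}}{\log k_{1}}
\]
for an absolute constant $C>0$, valid for $k_{1},k_{2}$ sufficiently large. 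To establish it, write the support of $\delta$ as $T$, set $M_{T}:=\prod_{p\in T}p$ and $S':=S(k_{1},k_{2})\setminus T$, and decompose $x=M_{T}y$ to get
\[
p(\delta)=\frac{\lfloor n/M_{T}\rfloor}{n}\cdot\frac{\Psi(\lfloor n/M_{T}\rfloor,S')}{\lfloor n/M_{T}\rfloor},\qquad \tilde{p}(\delta)=\frac{1}{M_{T}}\prod_{q\in S'}\!\Bigl(1-\frac{1}{q}\Bigr),
\]
where $\Psi(N,S'):=\#\{y\le N:q\nmid y\ \forall q\in S'\}$ (and $p(\delta)=0$ when $M_{T}>n$). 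Since $M_{T}\lfloor n/M_{T}\rfloor/n\le 1$ and $\Psi(N,S')/N\le 1$, the ratio is controlled by $\prod_{q\in S'}(1-1/q)^{-1}\le\prod_{k_{1}<p\le k_{2}}(1-1/p)^{-1}$, which by Mertens' theorem is at most $C\log k_{2}/\log k_{1}$, uniformly in $\delta$.

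Combining these ingredients yields $\mathbb{P}(A)\le(C\log k_{2}/\log k_{1})^{n}\,\tilde{\mathbb{P}}\bigl(\sum_{p\in S(k_{1},n)}Y_{p}^{2}>\epsilon n^{2}\bigr)$, and taking $\frac{1}{n}\log$ and applying Theorem~\ref{SuperI} gives
\[
\frac{1}{n}\log\mathbb{P}(A)\le\log C+\log\log k_{2}-\log\log k_{1}+4-\frac{\log k_{1}}{8}\epsilon,
\]
which (after absorbing $\log C$ into the additive $4$) is even stronger than the stated bound $4\log\log k_{2}+4-(\log k_{1}/8)\epsilon$. The principal obstacle is the uniform ratio bound: one might at first fear that a delicate sieve estimate for $\Psi(N,S')$ is needed (the trouble is small $N=\lfloor n/M_{T}\rfloor$, where sieve asymptotics degenerate), but the trivial inequality $\Psi(N,S')\le N$ precisely sidesteps the sieve and leaves only Mertens' classical product to be invoked.
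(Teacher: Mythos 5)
Your proof is correct and follows essentially the same route as the paper's: change of measure from $\mathbb{P}$ to $\tilde{\mathbb{P}}$ on the $\sigma$-algebra generated by the divisibility indicators in $S(k_1,k_2)$, a uniform per-coordinate bound on the Radon--Nikodym ratio obtained by a trivial count for the $\mathbb{P}$-probability and a Mertens-type estimate for $\prod_{q\in S'}(1-1/q)$, followed by an appeal to Theorem~\ref{SuperI}. The only cosmetic difference is that you invoke Mertens' product theorem to get the ratio bound $C\log k_2/\log k_1$, whereas the paper bounds $\prod(1-1/q)\ge e^{-2\sum 1/q}$ and then uses the Meissel--Mertens estimate $\sum_{q\le k_2}1/q\le 2\log\log k_2$ to arrive at $e^{4\log\log k_2}$; both yield the needed superexponential conclusion.
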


\begin{proof}
First, observe that $\sum_{p\in S(k_{1},k_{2})}Y_{p}^{2}$ (resp. $\sum_{p\in S(k_{1},k_{2})}\tilde{Y}_{p}^{2}$) only depends on the events
$\{X_{i}\in E_{p_{1},\ldots,p_{\ell}}\}$ (resp. $\{\tilde{X}_{i}\in E_{p_{1},\ldots,p_{\ell}}\}$), where $i,\ell\in\{1,2,\ldots,n\}$ and $\{p_{1},\ldots,p_{\ell}\}\subset S(k_{1},k_{2})$
and
\begin{equation}
E_{p_{1},\ldots,p_{\ell}}:=
\left\{i\in\{1,2,\ldots,n\}|\text{Prime}(i)\cap S(k_{1},k_{2})=\{p_{1},\ldots,p_{\ell}\}\right\},
\end{equation}
where $\text{Prime}(x):=\{q\in\mathcal{P}:\text{$x$ is divisible by $q$}\}$. 
We will show that the following uniform upper bound holds,
\begin{equation}\label{UpperIV}
\frac{\mathbb{P}(X_{1}\in E_{p_{1},\ldots,p_{\ell}})}
{\mathbb{P}(\tilde{X}_{1}\in E_{p_{1},\ldots,p_{\ell}})}
\leq e^{4\log\log k_{2}}.
\end{equation}
Before we proceed, let us show that \eqref{UpperIV} and Theorem \ref{SuperI} implies \eqref{UpperIII}.
Since $X_{i}$'s are independent and $\tilde{X}_{i}$'s are independent,
\begin{equation}
\frac{\mathbb{P}\left(X_{i}\in E_{p_{1}^{i},\ldots,p_{\ell}^{i}},1\leq i\leq n\right)}
{\mathbb{P}\left(\tilde{X}_{i}\in E_{p_{1}^{i},\ldots,p_{\ell}^{i}},1\leq i\leq n\right)}
\leq\left[e^{4\log\log k_{2}}\right]^{n},
\end{equation}
where $\{p_{1}^{i},\ldots,p_{\ell}^{i}\}\subset S(k_{1},k_{2})$ for $1\leq i\leq n$.
Recall that
$\sum_{p\in S(k_{1},k_{2})}Y_{p}^{2}$ (resp. $\sum_{p\in S(k_{1},k_{2})}\tilde{Y}_{p}^{2}$) only depends on the events
$\{X_{i}\in E_{p_{1},\ldots,p_{\ell}}\}$ (resp. $\{\tilde{X}_{i}\in E_{p_{1},\ldots,p_{\ell}}\}$). Therefore,
\begin{align}
\frac{1}{n}\log\mathbb{P}\left(\sum_{p\in S(k_{1},k_{2})}Y_{p}^{2}>n^{2}\epsilon\right)
&\leq 4\log\log k_{2}+\frac{1}{n}\log\mathbb{P}\left(\sum_{p\in S(k_{1},k_{2})}\tilde{Y}_{p}^{2}>n^{2}\epsilon\right)
\\
&\leq 4\log\log k_{2}+4-\frac{\log(k_{1})}{8}\epsilon,\nonumber
\end{align}
where we used Theorem \ref{SuperI} at the last step. Now, let us prove \eqref{UpperIV}.
First, let us give an upper bound for the numerator, that is,
\begin{equation}\label{UpperV}
\mathbb{P}\left(X_{1}\in E_{p_{1},\ldots,p_{\ell}}\right)
=\frac{1}{n}\#|E_{p_{1},\ldots,p_{\ell}}|
\leq\frac{\left[\frac{n}{p_{1}\cdots p_{\ell}}\right]}{n}\leq\frac{1}{p_{1}\cdots p_{\ell}},
\end{equation}
where $[x]$ denotes the largest integer less or equal to $x$ and we used the simple fact that $\frac{[x]}{x}\leq 1$
for any positive $x$. 

As for the lower bound for the denominator, we have
\begin{align}
\mathbb{P}\left(\tilde{X}_{1}\in E_{p_{1},\ldots,p_{\ell}}\right)
&=\prod_{q\in\{p_{1},\ldots,p_{\ell}\}}\frac{1}{q}\prod_{q\in S(k_{1},k_{2})\backslash\{p_{1},\ldots,p_{\ell}\}}
\left(1-\frac{1}{q}\right)\label{UpperVI}
\\
&\geq\prod_{q\in\{p_{1},\ldots,p_{\ell}\}}\frac{1}{q}\prod_{q\in S(k_{1},k_{2})}\left(1-\frac{1}{q}\right)
\nonumber
\\
&\geq\prod_{q\in\{p_{1},\ldots,p_{\ell}\}}\frac{1}{q}e^{-2\sum_{q\in S(k_{1},k_{2})}\frac{1}{q}},
\nonumber
\end{align}
where we used the inequality that $1-x\geq e^{-2x}$ for $x\leq\frac{1}{2}$. Notice that
\begin{equation}
\lim_{n\rightarrow\infty}\left\{-\sum_{q\in S(1,n)}\frac{1}{q}+\log\log n\right\}=M,
\end{equation}
where $M=0.261497\ldots$ is the Meissel-Mertens constant.
Therefore, for sufficiently large $k_{2}$,
\begin{equation}\label{UpperVII}
\sum_{q\in S(k_{1},k_{2})}\frac{1}{q}\leq\sum_{q\in S(1,k_{2})}\frac{1}{q}\leq 2\log\log k_{2}.
\end{equation}
Combining \eqref{UpperV}, \eqref{UpperVI} and \eqref{UpperVII}, we have proved the upper bound in \eqref{UpperIV}.
\end{proof}

\begin{lemma}\label{CRTLemma}
Let $p_{j}$, $1\leq j\leq\ell$, $\ell\in\mathbb{N}$ be the primes such that $S(k_{1},k_{2})=\{p_{1},\ldots,p_{\ell}\}$ and
\begin{equation}
m\prod_{1\leq j\leq\ell}p_{j}\leq n<(m+1)\prod_{1\leq j\leq\ell}p_{j},
\end{equation}
where $m\in\mathbb{N}$. Then, there exists a coupling of vectors of random variables $X_{i}$ and $\tilde{X}_{i}$
for $1\leq i\leq n$, i.e. a measure $\mu$ with marginal distributions the same as $X_{i}$
and $\tilde{X}_{i}$ such that
\begin{equation}\label{UpperVIII}
\mu\left(\sum_{q\in S(k_{1},k_{2})}Y_{q}^{2}
-\sum_{q\in S(k_{1},k_{2})}\tilde{Y}_{q}^{2}\geq n^{2}\epsilon\right)
\leq 2^{n}\left(\frac{1}{m}\right)^{\frac{n\epsilon}{2k_{2}}}.
\end{equation}
\end{lemma}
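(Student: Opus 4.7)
The plan is to realize $\mu$ as a \emph{Chinese Remainder Theorem} (CRT) coupling on the event $\{X_{i}\leq mN\}$, where $N:=\prod_{j=1}^{\ell}p_{j}$, and then reduce \eqref{UpperVIII} to a Binomial tail estimate. The driving observation is that, by CRT, if $U$ is uniform on $\{1,\ldots,mN\}$ then the indicators $(\mathbf{1}\{p_{j}\mid U\})_{1\leq j\leq\ell}$ are independent Bernoulli$(1/p_{j})$; equivalently, the joint law of the $S(k_{1},k_{2})$-divisibility pattern of $U$ coincides \emph{exactly} with that of $\tilde{X}_{1}$ under $\tilde{\mathbb{P}}$.

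Based on this, I would construct the coupling as follows. Independently across $i$, sample $X_{i}$ uniformly on $\{1,\ldots,n\}$; if $X_{i}\leq mN$, set the $S(k_{1},k_{2})$-divisibility pattern of $\tilde{X}_{i}$ equal to that of $X_{i}$, and if $X_{i}>mN$, sample it independently from the $\tilde{\mathbb{P}}$-marginal. (Divisibility of $\tilde{X}_{i}$ by primes outside $S(k_{1},k_{2})$ can be filled in independently, as it plays no role in $\tilde{Y}_{q}$.) A one-line law-of-total-probability computation, combined with the CRT identity above, confirms that the marginal of $\tilde{X}_{i}$ is indeed $\tilde{\mathbb{P}}$, so the coupling is valid.

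Let $B:=\{i:X_{i}>mN\}$. Because $n<(m+1)N$ and $n\geq mN$, we have $\mathbb{P}(X_{i}>mN)=(n-mN)/n\leq 1/m$, so $|B|$ is stochastically dominated by $\mathrm{Bin}(n,1/m)$. For $i\notin B$ the divisibility patterns coincide, so splitting $Y_{q}=Z_{q}+W_{q}$ and $\tilde{Y}_{q}=Z_{q}+\tilde{W}_{q}$ according to whether $i\in B$, one has $0\leq W_{q}\leq|B|$ and $\tilde{Y}_{q}\geq Z_{q}$. Hence
$$Y_{q}^{2}-\tilde{Y}_{q}^{2}\leq Y_{q}^{2}-Z_{q}^{2}=W_{q}(Y_{q}+Z_{q})\leq 2n|B|.$$
Summing over $q\in S(k_{1},k_{2})$ and using $|S(k_{1},k_{2})|\leq\pi(k_{2})\leq k_{2}$ yields $\sum_{q}(Y_{q}^{2}-\tilde{Y}_{q}^{2})\leq 2nk_{2}|B|$, so the event in \eqref{UpperVIII} is contained in $\{|B|\geq n\epsilon/(2k_{2})\}$.

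The proof is then completed by the crude Binomial tail bound
$$\mu(|B|\geq t)\leq\sum_{j\geq t}\binom{n}{j}\left(\frac{1}{m}\right)^{j}\leq\left(\frac{1}{m}\right)^{t}\sum_{j=0}^{n}\binom{n}{j}=2^{n}\left(\frac{1}{m}\right)^{t},$$
applied with $t=n\epsilon/(2k_{2})$. The only conceptually delicate point is the CRT identification, which guarantees an \emph{exact} match of divisibility laws on $\{X_{i}\leq mN\}$; once that is in hand, the deterministic inequality for $\sum_{q}(Y_{q}^{2}-\tilde{Y}_{q}^{2})$ and the standard Binomial tail are routine, and I do not anticipate any further obstacle.
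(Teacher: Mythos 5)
Your proposal is correct and follows essentially the same route as the paper: the same CRT-based coupling on $\{X_i\le mN\}$, the same observation that the mismatch set $B$ has success probability at most $1/m$, the same reduction of $\sum_q(Y_q^2-\tilde{Y}_q^2)$ to $2k_2 n\,|B|$ (you decompose $Y_q=Z_q+W_q$ directly, while the paper phrases it as a Lipschitz/bounded-difference estimate, but the content is identical), and the same binomial tail bound yielding $2^n m^{-n\epsilon/(2k_2)}$.
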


\begin{proof}
The main ingredient of the proof is the Chinese Remainder Theorem which states that the set of equations
\begin{equation}\label{CRT}
\begin{cases}
x\equiv a_{1} &\text{mod}(p_{1})
\\
\quad\vdots
\\
x\equiv a_{\ell} &\text{mod}(p_{\ell})
\end{cases}
\end{equation}
has a unique solution $1\leq x\leq p_{1}\cdots p_{\ell}$, where $0\leq a_{i}<p_{i}$, $i\in\{1,2,\ldots,\ell\}$.
Hence, for each sequence of $a_{i}$'s, the set of equations in \eqref{CRT} has exactly $m$ solutions
for $1\leq x\leq mp_{1}\cdots p_{\ell}$. We denote these solutions by $R_{i}(a_{1},\ldots,a_{\ell})$ for $i\in\{1,2,\ldots,m\}$.
Given $X_{i}$ uniformly distributed on $\{1,2,\ldots,n\}$, 
we define $\tilde{X}_{i}$ as follows. We generate Bernoulli random variables $c_{j}$ for $1\leq j\leq\ell$,
with parameters $\frac{1}{p_{j}}$ and independent of each other. Now, define
\begin{equation}
\tilde{X}_{i}=
\begin{cases}
p_{1}^{c_{1}}\cdots p_{\ell}^{c_{\ell}} &\text{if $X_{i}>mp_{1}\cdots p_{\ell}$}
\\
p_{1}^{b_{1}}\cdots p_{\ell}^{b_{\ell}} &\text{otherwise}
\end{cases},
\end{equation}
where $b_{j}$ is $1$ if $X_{i}$ is divisible by $p_{j}$ and $0$ otherwise for $1\leq j\leq\ell$.
By the definition, if we condition on $X_{i}>mp_{1}\cdots p_{\ell}$, $\tilde{X}_{i}$ is the multiplication
of $p_{j}^{c_{j}}$ and $c_{j}$'s are independent. Now, conditional on $X_{i}\leq mp_{1}\cdots p_{\ell}$ and
let $\text{Prime}(X_{i})=\{p\in\mathcal{P}:\text{$X_{i}$ is divisible by $p$}\}$. 
Thus, for a vector $\overrightarrow{b}=(b_{j})_{j=1}^{\ell}\in\{0,1\}^{\ell}$, we have
\begin{align}
\Delta &:=\mu\left(\tilde{X}_{i}=\prod_{j=1}^{\ell}p_{j}^{b_{j}}|X_{i}\leq mp_{1}\cdots p_{\ell}\right)
\\
&=\mu\left(\text{Prime}(X_{i})\cap\{p_{1},\ldots,p_{\ell}\}=S(\overrightarrow{b})\right),
\nonumber
\end{align}
where $S(\overrightarrow{b}):=\{p_{j}|b_{j}=1,1\leq j\leq\ell\}$. But that is equivalent to
\begin{align}
\Delta &=\frac{\#\{R_{i}(a_{1},\ldots,a_{\ell})|a_{j}=0 \text{ if and only if } b_{j}=0,1\leq i\leq m\}}{mp_{1}\cdots p_{\ell}}
\\
&=\frac{m\prod_{b_{j}\neq 0}(p_{j}-1)}{mp_{1}\cdots p_{\ell}}\nonumber
\\
&=\prod_{j:b_{j}=0}\frac{1}{p_{j}}\prod_{j:b_{j}\neq 0}\left(1-\frac{1}{p_{j}}\right).\nonumber
\end{align}
Therefore, we get
\begin{equation}
\mu\left(X_{i}=\prod_{j=1}^{\ell}p_{j}^{b_{j}}\right)
=\prod_{j:b_{j}=0}\frac{1}{p_{j}}\prod_{j:b_{j}\neq 0}\left(1-\frac{1}{p_{j}}\right).
\end{equation}
Let us define
\begin{equation}
g(X_{i},\tilde{X}_{i}):=
\begin{cases}
1 &\text{if $\{\text{Prime}(X_{i})\cap S(k_{1},k_{2})\}\neq\{\text{Prime}(\tilde{X}_{i})\cap S(k_{1},k_{2})\}$}
\\
0 &\text{otherwise}
\end{cases}.
\end{equation}
By the definition of the coupling of $\overrightarrow{X}$ and $\overrightarrow{\tilde{X}}$, we have
$\mathbb{P}(g(X_{i},\tilde{X}_{i})=1)\leq\frac{1}{m}$ since the event $g(X_{i},\tilde{X}_{i})=1$ implies
that $X_{i}>mp_{1}\cdots p_{\ell}$ which occurs with probability
\begin{equation}
\frac{n-mp_{1}p_{2}\cdots p_{\ell}}{n}\leq 1-\frac{mp_{1}p_{2}\cdots p_{\ell}}{(m+1)p_{1}\cdots p_{\ell}}
=\frac{1}{m+1}<\frac{1}{m}.
\end{equation}

Now, let us go back to prove the superexponential bound in \eqref{UpperVIII}. 
Observe that
\begin{equation}
f(X_{1},\ldots,X_{n}):=\sum_{q\in S(k_{1},k_{2})}Y_{q}^{2}
=\sum_{q\in S(k_{1},k_{2})}Y_{q}
+\sum_{q\in S(k_{1},k_{2})}\sum_{i\neq j}1_{q|\text{gcd}(X_{i},X_{j})}.
\end{equation}
Hence,
\begin{equation}
\left|\sum_{q\in S(k_{1},k_{2})}Y_{q}^{2}-\tilde{Y}_{q}^{2}\right|\leq\#\{i|g(X_{i},\tilde{X}_{i})=1\}2k_{2}n.
\end{equation}
That is because if we change one of $X_{i}$'s, the function $f(X_{1},\ldots,X_{n})$ changes by at most $k_{2}(n+1)\leq 2k_{2}n$.
Therefore,
\begin{align}
&\mu\left(\sum_{q\in S(k_{1},k_{2})}Y_{q}^{2}-\tilde{Y}_{q}^{2}\geq n^{2}\epsilon\right)
\\
&\leq\mu\left(2k_{2}n\#\{i|g(X_{i},\tilde{X}_{i})=1\}\geq n^{2}\epsilon\right)\nonumber
\\
&=\mu\left(\#\{i|g(X_{i},\tilde{X}_{i})=1\}\geq n\frac{\epsilon}{2k_{2}}\right).\nonumber
\end{align}
Notice that $\#\{i|g(X_{i},\tilde{X}_{i})=1\}=\sum_{i=1}^{n}1_{g(X_{i},\tilde{X}_{i})=1}$ is the sum of i.i.d. indicator functions and $\mu(g(X_{1},\tilde{X}_{1})=1)\leq\frac{1}{m}$.
Hence, by Chebychev's inequality, by choosing $\theta=\log m>0$, we have
\begin{align}
&\mu\left(\#\{i|g(X_{i},\tilde{X}_{i})=1\}\geq n\frac{\epsilon}{2k_{2}}\right)
\\
&\leq\mathbb{E}\left[e^{\theta 1_{g(X_{1},\tilde{X}_{1})=1}}\right]^{n}e^{-\theta n\frac{\epsilon}{2k_{2}}}
\nonumber
\\
&\leq\left(\frac{e^{\theta}}{m}+1\right)^{n}e^{-\theta n\frac{\epsilon}{2k_{2}}}
\nonumber
\\
&\leq2^{n}e^{-(\log m)n\frac{\epsilon}{2k_{2}}}
\nonumber
\end{align}
which yields the desired result.
\end{proof}

\begin{theorem}\label{SuperII}
For any $\epsilon>0$, we have the following superexponential estimates,
\begin{equation}
\limsup_{k\rightarrow\infty}\limsup_{n\rightarrow\infty}\frac{1}{n}
\log\mathbb{P}\left(\sum_{q\in S(k,n)}Y_{q}^{2}>n^{2}\epsilon\right)=-\infty.
\end{equation}
\end{theorem}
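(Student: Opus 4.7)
The plan is to partition the prime range $(k,n]$ into three sub-ranges, each controlled by one of the tools already developed. The crucial tension is that Lemma~\ref{CRTLemma} yields a useful coupling only when $\prod_{p\in S(k_1,k_2)}p\le n$, which forces $k_2=O(\log n)$, whereas Lemma~\ref{K1K2} applied with $k_2=n$ carries an additive $4\log\log n$ penalty that cannot be absorbed by the $(\log k/8)\epsilon$ gain for fixed $k$. Neither lemma alone bridges $k$ to $n$, so I would interpolate with two intermediate thresholds $k<k_2<k_3<n$.

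Concretely, take $k_2=(\log\log n)^{A}$ and $k_3=(\log n)^{B}$ with constants $A,B>96/\epsilon$, and decompose
\[
\sum_{p\in S(k,n)} Y_p^2 \;=\; \sum_{p\in S(k,k_2)} Y_p^2 \;+\; \sum_{p\in S(k_2,k_3)} Y_p^2 \;+\; \sum_{p\in S(k_3,n)} Y_p^2.
\]
By a union bound, it suffices to show that each piece exceeds $\epsilon n^2/3$ with superexponentially small probability. For $S(k,k_2)$, the condition $k_2=o(\log n)$ gives $\prod_{p\le k_2}p \ll n$, so the quantity $m$ of Lemma~\ref{CRTLemma} satisfies $\log m \sim \log n$ and the coupling error $2^{n} m^{-n\epsilon/(12k_2)}$ yields the rate $\log 2-(\epsilon\log m)/(12k_2)\to -\infty$ as $n\to\infty$; the residual $\tilde{\mathbb P}$-probability is then controlled by Theorem~\ref{SuperI}, producing $-(\log k)\epsilon/48+O(1)\to -\infty$ as $k\to\infty$. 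For $S(k_2,k_3)$, Lemma~\ref{K1K2} gives a bound $4\log\log k_3+4-(\log k_2)\epsilon/24=(4-A\epsilon/24)\log\log\log n+O(1)$, which tends to $-\infty$ under our choice $A>96/\epsilon$. For $S(k_3,n)$, Lemma~\ref{K1K2} with upper endpoint $n$ gives $4\log\log n+4-(\log k_3)\epsilon/24=(4-B\epsilon/24)\log\log n+O(1)$, again tending to $-\infty$ under $B>96/\epsilon$.

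The main obstacle is the delicate calibration of $k_2$. On the one hand, $k_2$ must be small enough to keep the CRT coupling valid: since $\prod_{p\le k_2}p=e^{(1+o(1))k_2}$ must stay $\ll n$, one needs $k_2=o(\log n)$, which motivates the choice $(\log\log n)^{A}$. On the other hand, $\log k_2=A\log\log\log n$ must be large enough to absorb the $4\log\log\log n$ loss coming from $4\log\log k_3$ in the middle piece. The three decay rates therefore operate at three different logarithmic scales ($\log n/k_2$ from the coupling, $\log\log\log n$ from the middle range, and $\log\log n$ from the top range), and the essential work in a rigorous proof is the bookkeeping that verifies, for $A$ and $B$ chosen as explicit functions of $\epsilon$, that each piece yields a rate tending to $-\infty$ under the iterated $\limsup_{k\to\infty}\limsup_{n\to\infty}$ demanded by the theorem.
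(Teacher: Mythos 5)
Your proposal takes essentially the same approach as the paper: a three-way decomposition of $S(k,n)$ at thresholds growing like $(\log\log n)^{\Theta(1/\epsilon)}$ and $(\log n)^{\Theta(1/\epsilon)}$, with Lemma~\ref{CRTLemma} plus Theorem~\ref{SuperI} handling the bottom range and Lemma~\ref{K1K2} handling the middle and top ranges (the paper uses exponent $120/\epsilon$ where you use any $A,B>96/\epsilon$, an inessential difference). Your bookkeeping of the three logarithmic scales and the calibration of the intermediate thresholds is correct and mirrors the paper's argument.
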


\begin{proof}
Let us write
\begin{equation}\label{ThreeTerms}
\sum_{q\in S(k,n)}Y_{q}^{2}
=\sum_{q\in S(k,M_{1})}Y_{q}^{2}
+\sum_{q\in S(M_{1},M_{2})}Y_{q}^{2}
+\sum_{q\in S(M_{2},n)}Y_{q}^{2},
\end{equation}
where $M_{1}:=[\log\log n]^{\frac{120}{\epsilon}}$ and $M_{2}:=[\log n]^{\frac{120}{\epsilon}}$.
By Lemma \ref{K1K2}, for the second and third terms in \eqref{ThreeTerms}, we have
\begin{align}
&\frac{1}{n}\log\mathbb{P}\left(\sum_{q\in S(M_{1},M_{2})}Y_{q}^{2}>\frac{n^{2}\epsilon}{3}\right)\label{UpperIX}
\\
&\leq 4\log\log M_{2}+4-\frac{\log M_{1}}{8}\frac{\epsilon}{3}\nonumber
\\
&=4\log\left(\log\left([\log n]^{\frac{120}{\epsilon}}\right)\right)+4-\frac{\epsilon}{24}\log\left([\log(\log(n))]^{\frac{120}{\epsilon}}\right)
\nonumber
\\
&=4\log\left(\log\left(\frac{120}{\epsilon}\right)+\log\log n\right)+4-5\log\log\log n
\nonumber
\\
&=4\log\log\left(\frac{120}{\epsilon}\right)+4-\log\log\log n,\nonumber
\end{align}
and similarly,
\begin{equation}\label{UpperX}
\frac{1}{n}\log\mathbb{P}\left(\sum_{q\in S(M_{2},n)}Y_{q}^{2}>\frac{n^{2}\epsilon}{3}\right)
\leq-\log(\log n)+4.
\end{equation}
In addition, for the first term in \eqref{ThreeTerms}, by Lemma \ref{CRTLemma}, we get
\begin{equation}\label{UpperXI}
\frac{1}{n}\log\mu\left(\left|\sum_{q\in S(k,M_{1})}Y_{q}^{2}-\sum_{q\in S(k,M_{1})}\tilde{Y}_{q}^{2}\right|>\frac{n^{2}\epsilon}{6}\right)
\leq\log 2-\frac{\epsilon}{12M_{1}}\log M_{0},
\end{equation}
where
\begin{align}
M_{0}&:=\frac{n}{\prod_{q\in S(k,M_{1})}q}\label{UpperXII}
\\
&\geq\frac{n}{M_{1}^{M_{1}}}\nonumber
\\
&=\exp\left\{\log(n)-\frac{120}{\epsilon}(\log\log n)^{\frac{120}{\epsilon}}\log\log\log n\right\}.\nonumber
\end{align}
By Theorem \ref{SuperI}, 
\begin{equation}\label{UpperXIII}
\frac{1}{n}\log\mathbb{P}\left(\sum_{p\in S(k,M_{1})}\tilde{Y}_{p}^{2}\geq\frac{n^{2}\epsilon}{6}\right)
\leq-\frac{\epsilon}{48}\log(k)+4.
\end{equation}
Combining \eqref{UpperIX}, \eqref{UpperX}, \eqref{UpperXI}, \eqref{UpperXII} and \eqref{UpperXIII}, we get the desired result.
\end{proof}

Finally, we are ready to prove Theorem \ref{LDPThm}.

\begin{proof}[Proof of Theorem \ref{LDPThm}]
We let $U_{i}$, for $1\leq i\leq n$, be i.i.d. random variables chosen from measure $\nu$ as in \eqref{my_nu}. In addition, we define $U_{i}^{k}$, for $k\in \mathbb{N}$, as the restriction of $U_{i}$  to its first $k$ digits, i.e. 
\begin{equation}
\chi_{j}(U_{i}^{k}) =
\begin{cases}
\chi_{j}(U_{i}) &\text{if $j\leq k$}
\\
0 &\text{if $j>k$}
\end{cases},
\end{equation}
where $\chi$ is defined in \eqref{chi}.

Let $L_{n}$, $L_{n}^{k}$ be the empirical measures of $U_{i}$, $U_{i}^{k}$, i.e.
\begin{equation}
L_{n}(x):=\frac{1}{n}\sum_{i=1}^{n}\delta_{U_{i}}(x),
\end{equation}
and
\begin{equation}
L_{n}^{k}(x):=\frac{1}{n}\sum_{i=1}^{n}\delta_{U_{i}^{k}}(x).
\end{equation}

In large deviations theory, Sanov's theorem (see e.g. Dembo and Zeitouni \cite{Dembo}) says that,  
for a sequence of i.i.d. random variables $X_{1},X_{2},\ldots,X_{n}$ taking values in a Polish space $\mathbb{X}$
with common distribution $\alpha\in\mathcal{M}(\mathbb{X})$, the space of probability measures on $\mathbb{X}$
equipped with weak topology, the probability measures $\mathbb{P}(\frac{1}{n}\sum_{i=1}^{n}\delta_{X_{i}}\in\cdot)$ 
induced by the empirical measures $\frac{1}{n}\sum_{i=1}^{n}\delta_{X_{i}}$ satisfy
a large deviation principle with rate function $I(\beta)$ given by
\begin{equation}
I(\beta)=\int_{\mathbb{X}}\frac{d\beta}{d\alpha}\log\frac{d\beta}{d\alpha}\alpha(dx),
\end{equation}
if $\beta\ll\alpha$ and $\frac{d\beta}{d\alpha}|\log\frac{d\beta}{d\alpha}|\in L^{1}(\alpha)$ and $I(\beta)=+\infty$ otherwise.

Therefore, by Sanov's theorem,
$\mathbb{P}(L_{n}\in\cdot)$ satisfies a large deviation
principle on $\mathcal{M}[0,1]$, the space of probability measures on $[0,1]$, equipped with the weak topology
and the rate function
\begin{equation}
I(\mu)=
\begin{cases}
\int_{[0,1]}\log\left(\frac{d\mu}{d\nu}\right)d\mu &\text{if $\mu\ll\nu$ and $|\log\frac{d\mu}{d\nu}|\in L^{1}(\mu)$}
\\
+\infty &\text{otherwise}
\end{cases}.
\end{equation}

We define $f_{k}:[0,1]^{2}\rightarrow\{0,1\}$, for $k\in\mathbb{N}$, and redefine $f$ from \eqref{my_rate} as follows
\begin{equation}
f_{k}(x_{1},x_{2})=1-\max_{1\leq i\leq k}\chi_{i}(x_{1})\chi_{i}(x_{2}),
\quad
\text{and}
\quad
f(x_{1},x_{2}):=1-\max_{i\in\mathbb{N}}\chi_{i}(x_{1})\chi_{i}(x_{2}).
\end{equation}
In other words, $f$ is $1$ if $x_{1}$ and $x_{2}$ do not share a common $1$ at the same place in their binary expansions
and $f$ is $0$ otherwise. Similar interpretation holds for $f_{k}$. Clearly, $f_{k}\geq f$ and
$\lim_{k\rightarrow\infty}f_{k}(x_{1},x_{2})=f(x_{1},x_{2})$. Again, let $\nu$ be the probability measure on $[0,1]$ such
that for a random variable $x$ with measure $\nu$, $\chi_{i}(x)$ are i.i.d. Bernoulli random variables
with parameters $\frac{1}{p_{i}}$, where $p_{i}$ is the $i$th smallest prime number.

Let $\alpha^{k}:=\{\alpha\in[0,1]|\chi_{i}(\alpha)=0\text{ for }i>k\}$ be the set
of numbers on $[0,1]$ with $k$-digit binary expansion. We define
\begin{equation}
A_{\alpha}:=\{x\in[0,1]|\chi_{i}(\alpha)=\chi_{i}(x), 1\leq i\leq k\}.
\end{equation}
Let $F_{k}(\mu):=\iint_{[0,1]^{2}}f_{k}(x_{1},x_{2})d\mu(x_{1})d\mu(x_{2})$ 
and $F(\mu):=\iint_{[0,1]^{2}}f(x_{1},x_{2})d\mu(x_{1})d\mu(x_{2})$. 
We have
\begin{equation}
F_{k}(\mu)=\sum_{\alpha,\beta\in\alpha^{k}}f_{k}(\alpha,\beta)\mu(A_{\alpha})\mu(A_{\beta}).
\end{equation}
Hence the map $\mu\mapsto F_{k}(\mu)$ is continuous, i.e. for $\mu_{n}\rightarrow\mu$ in the weak topology,
$F_{k}(\mu_{n})\rightarrow F_{k}(\mu)$. 
In large deviations theory, the contraction principle (see e.g. Dembo and Zeitouni \cite{Dembo}) says that
if $\mathbb{P}_{n}$ satisfies a large deviation principle on a Polish space $\mathbb{X}$ with rate function $I(\cdot)$
and $F$ is a continuous mapping from $\mathbb{X}$ to another Polish space $\mathbb{Y}$, then $\mathbb{P}_{n}F^{-1}$
satisfies a large deviation principle on $\mathbb{Y}$ with a rate function $J(\cdot)$ given by $J(y)=\inf_{x:F(x)=y}I(x)$.

Therefore, by the contraction principle, $\mathbb{P}(L_{n}\circ F_{k}^{-1}\in\cdot)$ 
satisfies a large deviation principle with good rate function
\begin{equation}
I^{(k)}(x)=\inf_{\iint_{[0,1]^{2}}f_{k}(x_{1},x_{2})d\mu(x_{1})d\mu(x_{2})=x}\int_{[0,1]}\log\left(\frac{d\mu}{d\nu}\right)d\mu.
\end{equation}
Moreover, in Theorem \ref{SuperI}, we proved that
\begin{equation}
\limsup_{k\rightarrow\infty}\limsup_{n\rightarrow\infty}
\frac{1}{n}\log\mathbb{P}\left(\iint_{[0,1]^{2}}(f_{k}-f)dL_{n}(x)dL_{n}(y)\geq\delta\right)=-\infty,
\end{equation}
for any $\delta>0$. In other words, 
the family $\{L_{n}\circ F_{k}^{-1}\}$ are exponentially good approximation of $\{L_{n}\circ F^{-1}\}$, 
see Definition 4.2.14 in Dembo and Zeitouni \cite{Dembo}.
Now, by Theorem 4.2.16 in Dembo and Zeitouni \cite{Dembo}, $\mathbb{P}(L_{n}\circ F^{-1}\in\cdot)$ satisfies a weak large deviation principle (for the definition of weak large deviation principle, we refer to page 7 of Dembo and Zeitouni \cite{Dembo}) 
with the rate function
\begin{equation}
I_{1}(x)=\sup_{\delta>0}\liminf_{k\rightarrow\infty}\inf_{|w-x|<\delta}I^{(k)}(w).
\end{equation}
Since the interval $[0,1]$ is compact, $\mathbb{P}(L_{n}\circ F^{-1}\in\cdot)$ satisfies the full large deviation principle with
good rate function $I_{1}(x)$ as above and it is easy to check that
\begin{equation}
I_{1}(x)=\inf_{\iint_{[0,1]^{2}}f(x_{1},x_{2})\mu(dx_{1})\mu(dx_{2})=x}\int_{[0,1]}\log\left(\frac{d\mu}{d\nu}\right)d\mu.
\end{equation}

For any $p\in\mathcal{P}$, let us recall that $Y_{p}=\sum_{i=1}^{n}1_{p|X_{i}}$, $\tilde{Y}_{p}=\sum_{i=1}^{n}1_{p|\tilde{X}_{i}}$, 
and for any $k_{1},k_{2}\in\mathbb{N}$, 
$S(k_{1},k_{2})=\{p\in\mathcal{P}:k_{1}<p\leq k_{2}\}$.

By Theorem \ref{SuperI}, we have
\begin{align}
&\limsup_{k\rightarrow\infty}\limsup_{n\rightarrow\infty}\frac{1}{n}\log
\mathbb{P}\left(\frac{1}{n^{2}}
\sum_{1\leq i,j\leq n}1_{\text{gcd}(\tilde{X}_{i}^{k},\tilde{X}_{j}^{k})=1}-1_{\text{gcd}(\tilde{X}_{i},\tilde{X}_{j})=1}\geq\epsilon\right)
\\
&=\limsup_{k\rightarrow\infty}\limsup_{n\rightarrow\infty}\frac{1}{n}\log
\mathbb{P}\left(\frac{1}{n^{2}}
\sum_{1\leq i,j\leq n}\sum_{p\in S(k,n)}1_{p|\tilde{X}_{i},p|\tilde{X}_{j}}\geq\epsilon\right)
\nonumber
\\
&\leq\limsup_{k\rightarrow\infty}\limsup_{n\rightarrow\infty}\frac{1}{n}\log
\mathbb{P}\left(\frac{1}{n^{2}}
\sum_{p\in S(k,n)}\tilde{Y}_{p}^{2}\geq\epsilon\right)
\nonumber
\\
&=-\infty.\nonumber
\end{align}
Next, notice that the difference between $\mathbb{P}(\frac{1}{n^{2}}\sum_{1\leq i,j\leq n}1_{\text{gcd}(X_{i}^{k},X_{j}^{k})=1}\in\cdot)$
and $\mathbb{P}(\frac{1}{n^{2}}\sum_{1\leq i,j\leq n}1_{\text{gcd}(\tilde{X}_{i}^{k},\tilde{X}_{j}^{k})=1}\in\cdot)$ is superexponentially small by Lemma \ref{CRTLemma}.
Finally, by Theorem \ref{SuperII},
\begin{equation}
\limsup_{k\rightarrow\infty}\limsup_{n\rightarrow\infty}\frac{1}{n}\log
\mathbb{P}\left(\frac{1}{n^{2}}
\sum_{1\leq i,j\leq n}1_{\text{gcd}(X_{i}^{k},X_{j}^{k})=1}-1_{\text{gcd}(X_{i},X_{j})=1}\geq\epsilon\right)
=-\infty.
\end{equation}
This implies that 
\begin{align}
-\inf_{x\in A^{o}}I_{1}(x)&\leq\liminf_{n\rightarrow\infty}\frac{1}{n}\log
\mathbb{P}\left(\frac{1}{n^{2}}\sum_{1\leq i,j\leq n}1_{\text{gcd}(X_{i},X_{j})=1}\in A\right)
\\
&\leq\limsup_{n\rightarrow\infty}\frac{1}{n}\log
\mathbb{P}\left(\frac{1}{n^{2}}\sum_{1\leq i,j\leq n}1_{\text{gcd}(X_{i},X_{j})=1}\in A\right)
\leq-\inf_{x\in\overline{A}}I_{1}(x).
\nonumber
\end{align}
\end{proof}

\section{Proofs of Central Limit Theorem}\label{CLTProofs}

\begin{proof}[Proof of Theorem \ref{CLTThm}]
Here, we prove our result for $\ell=1$. The proof for $\ell>1$ is the same that is skipped.

Instead of summing over $1\leq i,j\leq n$, we only need to consider $1\leq i\neq j\leq n$.
The reason is because if $i=j$, then $\text{gcd}(X_{i},X_{i})=1$ if and only if $X_{i}=1$
which occurs with probability $\frac{1}{n}$ and therefore $\frac{1}{2n^{3/2}}\sum_{i=1}^{n}1_{\text{gcd}(X_{i},X_{i})=1}$
is negligible in the limit as $n\rightarrow\infty$. Moreover, 
\begin{equation}
\sum_{1\leq i\neq j\leq n}1_{\text{gcd}(X_{i},X_{j})=1}
=2\sum_{1\leq i<j\leq n}1_{\text{gcd}(X_{i},X_{j})=1},
\end{equation}
and we can therefore concentrate on $1\leq i<j\leq n$.

Let us define $a_{ij}=1_{\text{gcd}(X_{i},X_{j})=1}$ for $1\leq i<j\leq n$.
$a_{ij}$ have the same distribution and let $\alpha_{n}$ be the mean of $a_{12}$.
Then, we have
\begin{equation}
\alpha_{n}=\mathbb{E}[a_{12}]=\mathbb{P}(\text{gcd}(X_{1},X_{2})=1)
\rightarrow\prod_{p\in\mathcal{P}}\left(1-\frac{1}{p^{2}}\right),
\end{equation}
as $n\rightarrow\infty$.
Define $\tilde{a}_{ij}:=a_{ij}-\alpha_{n}$ and $W=\sum_{(i,j)\in I}\tilde{a}_{ij}$,
where the sum is taken over the set $I$ that is all the pairs of $i,j\in[n]$ and $i<j$.
Therefore,
\begin{equation}
\sigma_{n}^{2}:=\text{Var}(W)=\mathbb{E}\left[\left(\sum_{(i,j)\in I}\tilde{a}_{ij}\right)^{2}\right]
=\sum_{(i,j)}\sum_{(\ell,k)}\mathbb{E}[\tilde{a}_{ij}\tilde{a}_{k\ell}].
\end{equation}
Note that if the intersection of $\{i,j\}$ and $\{k,\ell\}$ is empty, then
$\tilde{a}_{ij}$ and $\tilde{a}_{k\ell}$ are independent and
$\mathbb{E}[\tilde{a}_{ij}\tilde{a}_{k\ell}]=\mathbb{E}[\tilde{a}_{ij}]
\mathbb{E}[\tilde{a}_{k\ell}]=0$. The remaining two cases are either
$\{i,j\}=\{k,\ell\}$ or $|\{i,j\}\cap\{k,\ell\}|=1$. 
For the former, we have
\begin{equation}
\mathbb{E}[\tilde{a}_{ij}\tilde{a}_{ij}]=\mathbb{E}[a_{ij}^{2}]-\alpha_{n}^{2}
=\mathbb{E}[a_{ij}]-\alpha_{n}^{2}=\alpha_{n}-\alpha_{n}^{2}.
\end{equation}
For the latter, assuming without loss of generality that $i=k$ and $j\neq\ell$,
we get
\begin{align}
\mathbb{E}\left[\tilde{a}_{ij}\tilde{a}_{k\ell}\right]
&=\mathbb{E}\left[a_{ij}a_{i\ell}\right]-\alpha_{n}^{2}
\\
&=\mathbb{P}\left(\text{gcd}(X_{i},X_{j})=\text{gcd}(X_{i},X_{\ell})=1\right)-\alpha_{n}^{2}
\nonumber
\\
&=\beta_{n}-\alpha_{n}^{2},\nonumber
\end{align}
where $\beta_{n}:=\mathbb{P}\left(\text{gcd}(X_{i},X_{j})=\text{gcd}(X_{i},X_{\ell})=1\right)$.

Let $p$ be a prime number and $\tilde{X}_{1}$, $\tilde{X}_{2}$, $\tilde{X}_{3}$ be defined as before, that is, 
three i.i.d. integer valued random
variables so that $\tilde{X}_{1}$ is divisible by $p\leq n$ with probability $\frac{1}{p}$. Then, by inclusion-exclusion principle,
it is easy to see that 
\begin{equation}
\lim_{n\rightarrow\infty}\beta_{n}
=\lim_{n\rightarrow\infty}
\mathbb{P}(\text{gcd}(\tilde{X}_{1},\tilde{X}_{2})=\text{gcd}(\tilde{X}_{1},\tilde{X}_{3})=1)
=\prod_{p\in\mathcal{P}}\left(1-\frac{2}{p^{2}}+\frac{1}{p^{3}}\right).
\end{equation}

We have $\binom{n}{2}$ pairs that $\{i,j\}=\{k,\ell\}$
and $3\times 2\times\binom{n}{3}$ pairs that $|\{i,j\}\cap\{k,\ell\}|=1$.
(We pick three numbers from $1$ to $n$. Then we pick one of them to be
duplicated, say $i$. Finally, we have two pairs as $(i,j)(i,k)$
and $(i,k)(i,j)$). Thus
\begin{equation}\label{sigmasquare}
\sigma_{n}^{2}=\binom{n}{2}(\alpha_{n}-\alpha_{n}^{2})+3\cdot 2\cdot\binom{n}{3}(\beta_{n}-\alpha_{n}^{2}),
\end{equation}
and we have
\begin{equation}
\frac{\sigma_{n}^{2}}{n^{3}}\rightarrow\prod_{p\in\mathcal{P}}
\left(1-\frac{2}{p^{2}}+\frac{1}{p^{3}}\right)-\frac{36}{\pi^{4}},
\end{equation}
as $n\rightarrow\infty$.

Now, our goal is to use the general theorem for random dependency graphs
to prove that $W=\frac{1}{\sigma_{n}}\sum_{(i,j)\in I}\tilde{a}_{ij}$
converges to a standard normal random variable.

We have a collection of dependent random variables $(\tilde{a}_{ij})_{(i,j)\in I}$.
We say $\tilde{a}_{ij}$ and $\tilde{a}_{k\ell}$ are neighbors if they
are dependent, i.e. $\{i,j\}\cap\{k,\ell\}\neq\emptyset$.

Let $N(i,j)=\{\text{neighbors of }(i,j)\}\cup\{(i,j)\}$.
Hence, $N(i,j)$ has $D=2n-5$ elements. In addition, let $Z$
be a standard normal random variable.

By Theorem 3.6. of Ross \cite{Ross}, we have
\begin{equation}
d_{W}(W,Z)
\leq\frac{D^{2}}{\sigma_{n}^{3}}\sum_{(i,j)\in I}\mathbb{E}|\tilde{a}_{ij}|^{3}
+\frac{\sqrt{28}}{\sqrt{\pi}}\frac{D^{3/2}}{\sigma_{n}^{2}}
\sqrt{\sum_{(i,j)\in I}\mathbb{E}|\tilde{a}_{ij}|^{4}}.
\end{equation}
Note that $|\tilde{a}_{ij}|$ is bounded by $1$. Thus, using \eqref{sigmasquare}, we have
\begin{align}
d_{W}(W,Z)&\leq\frac{D^{2}}{\sigma_{n}^{3}}\binom{n}{2}
+\frac{\sqrt{28}}{\sqrt{\pi}}\frac{D^{3/2}}{\sigma_{n}^{2}}\sqrt{\binom{n}{2}}
\\
&\leq\frac{(2n)^{2}\cdot n^{2}}{\sigma_{n}^{3}}+5\frac{(2n)^{3/2}n}{\sigma_{n}^{2}}
\nonumber
\\
&\leq\frac{C}{n^{1/2}}.\nonumber
\end{align}
where $C$ is a universal constant.
For the general $\ell\in\mathbb{N}$, $d_{W}(W,Z)\leq\frac{C_{\ell}}{n^{1/2}}$.
Note that $\lim{n\rightarrow\infty}\frac{\sigma_{n}^{2}}{n^{3}}=O(\ell^{-3})$
as $\ell\rightarrow\infty$ and also note that
$\mathbb{E}|\tilde{a}_{ij}|^{3}\leq\mathbb{E}|\tilde{a}_{ij}|=O(\ell^{-2})$
as $\ell\rightarrow\infty$. 
Therefore, $C_{\ell}=O(\ell^{5/2})$ as $\ell\rightarrow\infty$.
\end{proof}

\section*{Acknowledgements}

The authors would like to thank the anonymous referee for a very careful reading of the manuscript
and helpful suggestions which greatly improve the paper. The authors also thank the Editor Professor Ben Green
for helpful comments.
The authors are also very grateful to Professor S. R. S. Varadhan for helpful discussions and generous suggestions.

\end{document}